\numberwithin{equation}{section}
\newtheorem{thm}{Theorem}[section]
\newtheorem{cor}[thm]{Corollary}
\newtheorem{lem}[thm]{Lemma}
\newtheorem{pro}[thm]{Proposition}
\newtheorem*{thm*}{Theorem}
\theoremstyle{remark}
\newtheorem{rem}[thm]{Remark}
\newtheorem{opq}[thm]{Problem}
\theoremstyle{definition}
\newtheorem{exa}[thm]{Example}
\DeclareMathOperator{\E}{e}
\DeclareMathOperator{\koo}{{\mathsf{root}}}
\DeclareMathOperator{\paa}{{\mathsf{par}}}
\DeclareMathOperator{\sumo}{{\sum\nolimits^{\smalloplus}}}
\newcommand*{\borel}[1]{{\mathfrak B}(#1)}
\newcommand*{\cbb}{\mathbb C}
\newcommand*{\coms}[1]{{\mathfrak C_{\mathrm{s}}(#1)}}
\newcommand*{\dz}[1]{{\EuScript D}(#1)}
\newcommand*{\dzn}[1]{{\EuScript D}^\infty(#1)}
\newcommand*{\D}{\mathrm{d}}
\newcommand*{\ee}{\mathcal E}
\newcommand*{\ff}{\mathcal F}
\newcommand*{\Ge}{\geqslant}
\newcommand*{\hh}{\mathcal H}
\newcommand*{\hinf}{{\EuScript H}_{\infty}}
\newcommand*{\I}{{\mathrm i}}
\newcommand*{\is}[2]{\langle#1,#2\rangle}
\newcommand*{\jd}[1]{\EuScript N(#1)}
\newcommand*{\lambdab}{{\boldsymbol\lambda}}
\newcommand*{\Le}{\leqslant}
\newcommand*{\mm}{\mathcal M}
\newcommand*{\nbb}{\mathbb N}
\newcommand*{\ogr}[1]{\boldsymbol B(#1)}
\newcommand*{\ob}[1]{{\mathcal R}(#1)}
\newcommand*{\pa}[1]{\paa(#1)}
\newcommand*{\rbb}{\mathbb R}
\newcommand*{\slam}{S_{\boldsymbol \lambda}}
\newcommand*{\smalloplus}{\raise0pt\hbox{$\scriptscriptstyle \oplus$}}
\newcommand*{\tcal}{{\mathscr T}}
\newcommand*{\zbb}{\mathbb Z}
\begin{document}
   \title[Unbounded quasinormal operators revisited]
{Unbounded quasinormal operators revisited}
   \author[Z.\ J.\ Jab{\l}o\'nski]{Zenon Jan Jab{\l}o\'nski}
\address{Instytut Matematyki, Uniwersytet Jagiello\'nski,
ul.\ \L ojasiewicza 6, PL-30348 Kra\-k\'ow, Poland}
   \email{Zenon.Jablonski@im.uj.edu.pl}
   \author[I.\ B.\ Jung]{Il Bong Jung}
   \address{Department of Mathematics, Kyungpook National
University, Daegu 702-701, Korea}
   \email{ibjung@knu.ac.kr}
   \author[J.\ Stochel]{Jan Stochel}
\address{Instytut Matematyki, Uniwersytet Jagiello\'nski,
ul.\ \L ojasiewicza 6, PL-30348 Kra\-k\'ow, Poland}
   \email{Jan.Stochel@im.uj.edu.pl}
   \thanks{Research of the second
author was supported by the National Research
Foundation of Korea(NRF) grant funded by the Korea
government(MSIP) (No.2009-0083521).}
    \subjclass[2010]{Primary 47B20; Secondary 47B37}
\keywords{Quasinormal operator, paranormal operator,
spectral measure, weighted shift on a directed tree}
   \begin{abstract}
Various characterizations of unbounded closed densely
defined operators commuting with the spectral measures
of their moduli are established. In particular,
Kaufman's definition of an unbounded quasinormal
operator is shown to coincide with that given by the
third-named author and Szafraniec. Examples
demonstrating the sharpness of results are
constructed.
   \end{abstract}
   \maketitle
   \section{Introduction}
The class of bounded quasinormal operators was
introduced by A. Brown in \cite{bro}. Two different
definitions of unbounded quasinormal operators
appeared independently in \cite{kau} and (a few years
later) in \cite{StSz1}. In the present paper we show
that both of these definitions coincide (cf.\ Theorem
\ref{thS1}). We also discuss the question of whether
the equality in Kaufman's definition of quasinormality
can be replaced by inclusion. It is shown that the
answer is in the affirmative if the inclusion is
properly chosen (cf.\ Theorem \ref{thS1} and Remark
\ref{sq0}). Next, we characterize quasinormality of
unbounded operators in terms of the truncated operator
Stieltjes moment problem (cf.\ Theorem
\ref{main}(iv)). This part of the paper is inspired by
a result of Embry which characterizes quasinormality
of bounded operators by means of the operator
Stieltjes moment problem (cf.\ \cite[page 63]{Em}).
Yet another characterization of quasinormality of
unbounded operators is given in Theorem \ref{main}(v).
It states that a closed densely defined operator $C$
is quasinormal if and only if the equality
$C^{*n}C^{n} = (C^*C)^n$ holds for $n=2,3$. In the
case of bounded operators, this characterization has
been known for specialists working in this area since
late '80s (recently it has been published in
\cite[Proposition 13]{Jib}; unfortunately, this paper
contains several errors). The proof of Theorem
\ref{main} is essentially more advanced and requires
the use of the technique of bounded vectors. That the
single equality $C^{*n}C^{n} = (C^*C)^n$ does not
imply the quasinormality of $C$ is elucidated by
Examples \ref{prz1}, \ref{prz2} and \ref{prz3}. The
first example is related to Toeplitz operators on the
Hardy space $H^2$, while two others are linked to
weighted shifts on a directed tree with one branching
vertex of valency $2$. Example \ref{prz2} enables us
to construct a bounded injective non-quasinormal
composition operator $C$ on an $L^2$ space over a
$\sigma$-finite measure space such that $(C^*C)^2 =
C^{*2}C^2$ (cf.\ Remark \ref{construct}). Example
\ref{prz3} shows how to separate (with respect to $n$)
the classes of operators $C$ satisfying the equality
$C^{*n}C^{n} = (C^*C)^n$. In Example \ref{prz4} we
construct a quasinormal operator $C$ such that $C^{*n}
\subsetneq (C^n)^*$ for every $n \Ge 2$. In Section
\ref{sect} we show that closed densely defined
operators $C$ which satisfy the equality $C^{*2}C^{2}
= (C^*C)^2$ and have closed squares, are paranormal
(cf.\ Theorem \ref{parapara}). As a consequence,
operators constructed in Examples \ref{prz1},
\ref{prz2} and \ref{prz3} are instances of bounded
injective paranormal operators which are not
hyponormal (see the first paragraph of Section
\ref{s4}). The paper concludes with an appendix
discussing orthogonal sums of unbounded operators.
Moreover, two open problems are formulated (cf.\
Problems \ref{opq0} and \ref{opq1}).

   In what follows $\nbb$, $\zbb_+$ and $\rbb_+$ stand
for the sets of positive integers, nonnegative
integers and nonnegative real numbers respectively.
Denote by $\borel{X}$ the $\sigma$-algebra of all
Borel subsets of a topological space $X$. Let $A$ be
an operator in a complex Hilbert space $\hh$ (all
operators considered in this paper are linear). Denote
by $\dz{A}$, $\jd{A}$, $\ob{A}$, $A^*$, $\bar A$ and
$\sigma(A)$ the domain, the kernel, the range, the
adjoint, the closure and the spectrum of $A$ (in case
they exist) respectively. Set $\dzn{A} =
\bigcap_{n=0}^\infty\dz{A^n}$. In what follows, we
write $C^{*n}$ in place of $(C^*)^n$. A vector
subspace $\ee$ of $\dz{A}$ is said to be a {\em core}
for $A$ if $\ee$ is dense in $\dz{A}$ with respect to
the graph norm of $A$. If $A$ is closed, then $\ee$ is
a core for $A$ if and only if $A=\overline{A|_{\ee}}$,
where $A|_{\ee}$ stands for the restriction of $A$ to
$\ee$. A closed densely defined operator $A$ has a
(unique) polar decomposition $A = U|A|$, where $U$ is
a partial isometry on $\hh$ such that $\jd{U} =
\jd{A}$ and $|A|$ is the square root of $A^*A$ (cf.\
\cite[Theorem 7.20]{weid}). If $A$ and $B$ are two
operators in $\hh$ such that the graph of $A$ is
contained in the graph of $B$, then we write $A
\subseteq B$ or $B \supseteq A$. In what follows,
$\ogr \hh$ stands for the $C^*$-algebra of all bounded
operators $A$ in $\hh$ such that $\dz{A}=\hh$. Denote
by $I$ the identity operator on $\hh$. Recall that a
closed densely defined operator $N$ in $\hh$ is {\em
normal} if $N^*N = NN^*$. If $N$ is a normal operator,
then its spectral measure, denoted here by $E_N$ and
usually defined on $\borel{\sigma(N)}$, can be thought
of as a spectral Borel measure on any fixed closed
subset of $\cbb$ containing $\sigma(N)$. In any case,
the closed support of such measure coincides with
$\sigma(N)$. For this and other facts concerning
unbounded operators we refer the reader to
\cite{b-s,weid}.
   \section{Commutativity}
In this section we discuss the question of
commutativity of a bounded normal operator with an
unbounded closed operator. Though this is a more
general approach than we need in this paper, the
question itself seems to be of independent interest.

Given $\ff \subseteq \ogr{\hh}$, we write $\ff^\prime
= \{T \in \ogr{\hh}\colon \forall A \in \ff, \,
TA=AT\}$ and $\ff^{\prime\prime} =
(\ff^{\prime})^\prime$. If $A$ is an operator in
$\hh$, then we set
   \begin{align*}
\coms{A} = \{T \in \ogr{\hh}\colon TA \subseteq AT
\text{ and } T^*A \subseteq AT^*\}.
   \end{align*}
Let $N$ be a normal operator in $\hh$. For a complex
Borel function $\phi$ on $\sigma(N)$, we abbreviate
$\int_{\sigma(N)} \phi \, \D E_N$ to $\phi(N)$ (cf.\
\cite{b-s}). If $A$ is an operator in $\hh$ such that
$E_N(\varDelta) A \subseteq A E_N(\varDelta)$ for
every $\varDelta \in \borel{\sigma(N)}$, then we write
$E_N A \subseteq A E_N$.
   \begin{lem} \label{lemS1}
Let $A$ be a closed operator in $\hh$ and $N$ be a
normal operator in $\hh$. Consider the following three
conditions{\em :}
   \begin{enumerate}
   \item[(i)] $NA \subseteq AN$
and $N^*A \subseteq AN^*$,
   \item[(ii)] $E_N A \subseteq A E_N$,
   \item[(iii)] $\phi(N) A \subseteq A
\phi(N)$ for every bounded complex Borel function
$\phi$ on $\sigma(N)$.
   \end{enumerate}
Then the conditions {\em (ii)} and {\em (iii)} are
equivalent. Moreover, if $N$ is bounded, then the
conditions {\em (i)}, {\em (ii)} and {\em (iii)} are
equivalent.
   \end{lem}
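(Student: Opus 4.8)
The plan is to establish (iii)$\Rightarrow$(ii)$\Rightarrow$(iii) with no restriction on $N$, and then, under the boundedness hypothesis, to close the loop with (iii)$\Rightarrow$(i)$\Rightarrow$(iii). The implication (iii)$\Rightarrow$(ii) is immediate on taking $\phi=\chi_{\varDelta}$, since $\chi_{\varDelta}(N)=E_N(\varDelta)$. For (ii)$\Rightarrow$(iii), note first that, because $\dz{A}$ is a vector space, condition (ii) gives $s(N)A\subseteq As(N)$ for every simple Borel function $s$ on $\sigma(N)$. Now fix a bounded Borel function $\phi$ on $\sigma(N)$ and choose simple Borel functions $s_n\to\phi$ pointwise with $\sup_n\|s_n\|_\infty\Le\|\phi\|_\infty$; then $s_n(N)\to\phi(N)$ in the strong operator topology, so for $f\in\dz{A}$ we have $s_n(N)f\to\phi(N)f$ and $A\bigl(s_n(N)f\bigr)=s_n(N)Af\to\phi(N)Af$. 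The closedness of $A$ then yields $\phi(N)f\in\dz{A}$ and $A\phi(N)f=\phi(N)Af$, i.e.\ (iii). Observe that closedness of $A$ is what is used here, whereas nothing is required of $N$.

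Assume now that $N$ is bounded, so that $\sigma(N)$ is compact. Since $\phi(z)=z$ and $\phi(z)=\bar z$ are bounded Borel functions on $\sigma(N)$ with $\phi(N)=N$ and $\phi(N)=N^{*}$, the implication (iii)$\Rightarrow$(i) is trivial. The substance of the lemma is (i)$\Rightarrow$(iii), for which I would use a graph argument that reduces it to the classical commutation theorem for bounded operators. Let $\Gamma\subseteq\hh\oplus\hh$ be the graph of $A$; it is closed because $A$ is closed. Put $M=N\oplus N\in\ogr{\hh\oplus\hh}$: this is a bounded normal operator with $M^{*}=N^{*}\oplus N^{*}$, and, reading off the definition of $\Gamma$, condition (i) asserts precisely that $M\Gamma\subseteq\Gamma$ and $M^{*}\Gamma\subseteq\Gamma$.

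Let $P$ be the orthogonal projection of $\hh\oplus\hh$ onto $\Gamma$. The inclusion $M\Gamma\subseteq\Gamma$ is equivalent to $MP=PMP$, and $M^{*}\Gamma\subseteq\Gamma$ to $M^{*}P=PM^{*}P$; taking the adjoint of the second identity gives $PM=PMP$, and combining the two we obtain $MP=PM$, so that $P\in\{M,M^{*}\}'$. Consequently $P$ commutes with every operator in $\{M,M^{*}\}''$; by the spectral theorem this bicommutant contains $\phi(M)$ for every bounded Borel function $\phi$ on $\sigma(M)=\sigma(N)$. Since $\phi(M)=\phi(N)\oplus\phi(N)$ and $P(\hh\oplus\hh)=\Gamma$, it follows that $\phi(M)\Gamma\subseteq\Gamma$, which, once more by the definition of $\Gamma$, means that $\phi(N)f\in\dz{A}$ and $A\phi(N)f=\phi(N)Af$ for all $f\in\dz{A}$; this is (iii).

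The step I expect to be the real obstacle is exactly (i)$\Rightarrow$(iii): upgrading commutation with the two operators $N$ and $N^{*}$ to commutation with the whole Borel functional calculus of $N$, in the presence of an unbounded operator $A$. The graph trick disposes of it, provided one keeps track of three bookkeeping points: that $\Gamma$ is closed (this uses $A$ closed); that $M=N\oplus N$ is a genuine bounded normal operator (this uses $N$ bounded, which is precisely why (i) appears only in the bounded case); and that invariance of $\Gamma$ under both $M$ and $M^{*}$ forces $P$ into $\{M,M^{*}\}'$. A more hands-on alternative would be to prove $p(N,N^{*})A\subseteq Ap(N,N^{*})$ for all polynomials $p$ in $N$ and $N^{*}$ by induction from (i), to pass to continuous $\phi$ via the Stone--Weierstrass theorem together with the closedness of $A$, and then to reach all bounded Borel $\phi$ by a functional monotone-class argument applied to $\{\phi:\phi(N)f\in\dz{A},\ A\phi(N)f=\phi(N)Af\}$ for fixed $f\in\dz{A}$; but the graph argument is shorter and keeps the role of each hypothesis transparent.
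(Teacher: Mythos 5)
Your proof is correct. The unrestricted equivalence (ii)$\Leftrightarrow$(iii) is handled essentially as in the paper: approximate $\phi$ by simple functions $s_n$, note that (ii) linearizes to $s_n(N)A\subseteq As_n(N)$, and use the closedness of $A$ to pass to the limit (the paper chooses the $s_n$ to converge to $\phi$ in $L^2(\mu_f+\mu_{Af})$ where $\mu_h=\is{E_N(\cdot)h}{h}$, rather than pointwise boundedly; both work). Where you genuinely diverge is the bounded case. The paper proves (i)$\Rightarrow$(ii) by observing that, since $A$ is closed, $\coms{A}$ is a von Neumann algebra containing $N$, hence containing $\mathcal W^*(I,N)=\{N,N^*\}^{\prime\prime}$ by the double commutant theorem, and in particular all spectral projections $E_N(\varDelta)$. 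You instead prove (i)$\Rightarrow$(iii) by the graph trick: the orthogonal projection $P$ onto the (closed) graph of $A$ commutes with $M=N\oplus N$ and $M^*$, hence with $\phi(M)=\phi(N)\oplus\phi(N)\in\{M,M^*\}^{\prime\prime}$, so the graph is invariant under $\phi(M)$. The two devices lean on the same theorem; the paper's version is shorter but asks the reader to accept that $\coms{A}$ is strongly closed (whose verification is the same closedness-of-$A$ argument used in (ii)$\Rightarrow$(iii)), whereas your graph argument makes the appearance of a von Neumann algebra completely concrete, at the cost of moving to $\hh\oplus\hh$ and the small bookkeeping identities $MP=PMP$, $M^*P=PM^*P$. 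All of your reductions (invariance of the graph under $M$ and $M^*$ being exactly condition (i), $\sigma(M)=\sigma(N)$, $\phi(M)=\phi(N)\oplus\phi(N)$) check out.
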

   \begin{proof}
Assume (ii) holds. For $h\in \hh$, we set
$\mu_h=\is{E_N(\cdot)h}{h}$. Take $f\in \dz{A}$.
Suppose $\phi$ is a bounded complex Borel function on
$\sigma(N)$. Then clearly $\phi \in L^2(\mu_f +
\mu_{Af})$. By \cite[Theorem 3.13]{Rud}, there exists
a sequence $\{s_n\}_{n=1}^\infty$ of complex Borel
simple functions on $\sigma(N)$ such that $\lim_{n\to
\infty} \int_{\sigma(N)} |\phi - s_n|^2 \D (\mu_f +
\mu_{Af}) = 0$. Hence, we have
   \allowdisplaybreaks
   \begin{align*}
\|\phi(N) f - s_n(N) f\|^2 & = \int_{\sigma(N)} |\phi
- s_n|^2 \, \D \mu_f \to 0 \text{ as } n \to \infty,
   \\
\|\phi(N) A f - s_n(N) A f\|^2 & = \int_{\sigma(N)}
|\phi - s_n|^2 \, \D \mu_{Af} \to 0 \text{ as } n \to
\infty.
   \end{align*}
Since, by our assumption, $s_n(N) A \subseteq A
s_n(N)$ for all $n\Ge 1$, we deduce that $s_n(N) f \to
\phi(N) f$ and $A (s_n(N) f) \to \phi(N) A f$ as $n\to
\infty$. This and the closedness of $A$ imply (iii).
The implication (iii)$\Rightarrow$(ii) is obvious.

Now suppose $N$ is bounded and (i) holds. Since $A$ is
closed, $\coms{A}$ is a von Neumann algebra with unit
$I$. By assumption $N \in \coms{A}$ and thus
$\coms{A}$ contains the von Neumann algebra
$\mathcal{W}^*(I,N)$ generated by $I$ and $N$. By von
Neumann's double commutant theorem\footnote{\;In fact,
by the Fuglede theorem $\{N,N^*\}^{\prime\prime} =
\{N\}^{\prime\prime}$.}, $\mathcal{W}^*(I,N) =
\{N,N^*\}^{\prime\prime}$. Since
$E_N(\borel{\sigma(N)}) \subseteq
\{N,N^*\}^{\prime\prime}$, we get (ii). The
implication (iii)$\Rightarrow$(i) is obvious. This
completes the proof.
   \end{proof}
   \begin{rem}
Regarding Lemma \ref{lemS1}, we note that by
approximating $\phi$ by polynomials in two complex
variables $z$ and $\bar z$ and arguing as in the proof
of (ii)$\Rightarrow$(iii), we can show that
(i)$\Rightarrow$(ii). It is worth pointing out that if
$A$ is closed and densely defined and $\sigma(A) \neq
\cbb$, then Lemma \ref{lemS1} remains true if one
drops the assumption that $N^*A \subseteq AN^*$.
Indeed, the inclusion $NA \subseteq AN$ implies that
$(\lambda - A)^{-1} N \subseteq N (\lambda - A)^{-1}$
for any fixed $\lambda \in \cbb\setminus \sigma(A)$,
and thus by the Fuglede theorem, for each bounded
complex Borel function $\phi$ on $\sigma(N)$,
$(\lambda - A)^{-1} \phi(N) = \phi(N) (\lambda -
A)^{-1}$, which in turn implies that $\phi(N) A
\subseteq A \phi(N)$ (in particular, $N^*A \subseteq
AN^*$).
   \end{rem}
The above remark suggests the following problem which
is related to the Fuglede theorem.
   \begin{opq}  \label{opq0}
Let $A$ be a closed operator in $\hh$ and $N$ be a
bounded normal operator in $\hh$ such that $NA
\subseteq AN$. Is it true that $N^*A \subseteq AN^*$?
   \end{opq}
Now we consider the issue of commutativity of an
unbounded operator with a spectral measure of a
positive selfadjoint operator.
   \begin{lem} \label{lemS3}
Let $A$ be a closed operator in $\hh$ and $R$ be a
positive selfadjoint operator in $\hh$. Then the
following conditions are equivalent{\em :}
   \begin{enumerate}
   \item[(i)] $E_R A \subseteq A E_R$,
   \item[(ii)] $(I + R)^{-1} A \subseteq A (I +
R)^{-1}$,
   \item[(iii)] $\phi(R) A \subseteq A \phi(R)$
for every bounded complex Borel function $\phi$ on
$\sigma(R)$.
   \end{enumerate}
   \end{lem}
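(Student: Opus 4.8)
The plan is to prove the cycle of implications (i)$\Rightarrow$(iii)$\Rightarrow$(ii)$\Rightarrow$(i). The equivalence (i)$\Leftrightarrow$(iii) is free of charge: since $R$ is selfadjoint, hence normal, Lemma \ref{lemS1} applied with $N=R$ asserts precisely that $E_R A \subseteq A E_R$ is equivalent to $\phi(R) A \subseteq A \phi(R)$ for every bounded complex Borel function $\phi$ on $\sigma(R)$. The implication (iii)$\Rightarrow$(ii) is immediate as well: the function $\rbb_+ \ni t \mapsto (1+t)^{-1}$ is bounded and Borel on $\sigma(R) \subseteq \rbb_+$, and the corresponding operator $\phi(R)$ is exactly $(I+R)^{-1}$. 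Thus the entire content of the lemma is the implication (ii)$\Rightarrow$(i).

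To prove (ii)$\Rightarrow$(i), set $S = (I+R)^{-1}$. Because $R$ is positive and selfadjoint, $I+R$ is bounded below by $I$ and selfadjoint, so $S$ is a bounded, injective, positive selfadjoint operator with $\sigma(S) \subseteq [0,1]$; in particular $E_S(\{0\}) = 0$ by the injectivity of $S$. The hypothesis (ii) reads $S A \subseteq A S$, and since $S = S^*$ it also yields $S^* A \subseteq A S^*$; in other words $S$ satisfies condition (i) of Lemma \ref{lemS1} with $N = S$. As $S$ is bounded, that lemma gives $\psi(S) A \subseteq A \psi(S)$ for every bounded complex Borel function $\psi$ on $\sigma(S)$.

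It remains to express the spectral projections of $R$ through those of $S$. Let $h\colon (0,1] \to \rbb_+$ be the homeomorphism $h(s) = s^{-1} - 1$. Then $R = h(S)$ in the sense of the Borel functional calculus---the value of $h$ at $0$ being irrelevant since $E_S(\{0\}) = 0$---and therefore $E_R(\varDelta) = E_S(h^{-1}(\varDelta))$ for every $\varDelta \in \borel{\sigma(R)}$ by the composition rule for spectral measures. Choosing $\psi = \chi_{h^{-1}(\varDelta)}$ (a bounded Borel function on $\sigma(S)$, set equal to $0$ at $0$) in the preceding paragraph, we get $E_R(\varDelta) A = \psi(S) A \subseteq A \psi(S) = A E_R(\varDelta)$, which is (i). The only step that requires a little care is this last passage from $R = h(S)$ to $E_R = E_S\circ h^{-1}$ across the point $0 \in \sigma(S)$, and it is handled by the injectivity of $S$; the identity itself is the standard change-of-variables formula for the spectral calculus of a selfadjoint operator (cf.\ \cite{b-s,weid}).
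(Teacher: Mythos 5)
Your proof is correct and follows essentially the same route as the paper: both reduce (ii)$\Rightarrow$(i) to the bounded case of Lemma \ref{lemS1} applied to $(I+R)^{-1}$ and then transfer the spectral measure of $(I+R)^{-1}$ back to that of $R$ via the homeomorphism $x\mapsto (1+x)^{-1}$ (your $h$ is just its inverse), while the remaining implications are immediate consequences of Lemma \ref{lemS1}. Your explicit treatment of the point $0\in\sigma((I+R)^{-1})$ via injectivity matches the paper's device of intersecting with $(0,1]$.
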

   \begin{proof}
(i)$\Rightarrow$(ii) Apply Lemma \ref{lemS1} with
$\phi(x) = \frac{1}{1+x}$.

(ii)$\Rightarrow$(i) Set $N=(I + R)^{-1}$ and note
that $N\in \ogr{\hh}$. Applying Lemma \ref{lemS1}, we
see that $E_{N} A \subseteq A E_{N}$. Since $R$ and
$N$ are positive and selfadjoint, we may (and do)
regard $E_R$ and $E_{N}$ as spectral Borel measures on
$\rbb_+$. Let $\phi\colon [0,\infty) \to (0,1]$ be the
homeomorphism given by $\phi(x)= \frac{1}{1+x}$ for $x
\in [0,\infty)$. By \cite[Theorem 5.4.10]{b-s}, we
have
   \begin{align*}
N = \int_{[0,\infty)} \phi(x) \, E_R (\D x) =
\int_{(0,1]} t \, E_R \circ \phi^{-1}(\D t),
   \end{align*}
where $E_R \circ \phi^{-1}$ is the spectral measure
given by $(E_R \circ \phi^{-1})(\varDelta) =
E_R(\phi^{-1}(\varDelta))$ for $\varDelta \in
\borel{(0,1]}$. By the uniqueness assertion in the
spectral theorem, this implies that $E_N (\varDelta) =
E_R(\phi^{-1}(\varDelta \cap (0,1]))$ for all
$\varDelta \in \borel{\rbb_+}$, and thus
   \begin{align*}
E_R(\varDelta) = E_R(\phi^{-1} (\phi(\varDelta))) =
E_N (\phi(\varDelta)), \quad \varDelta \in
\borel{\rbb_+}.
   \end{align*}
This and the fact that $E_{N}(\varDelta) A \subseteq A
E_{N}(\varDelta)$ for all $\varDelta \in
\borel{\rbb_+}$ yield (i).

(i)$\Leftrightarrow$(iii) Apply Lemma \ref{lemS1}.
   \end{proof}
   \section{Quasinormality revisited}
Following \cite{StSz1} (see \cite[Lemma 4.1]{bro} for
the bounded case) we say that a closed densely defined
operator $C$ in $\hh$ is {\em quasinormal} if $C$
commutes with $E_{|C|}$, i.e., $E_{|C|}C \subseteq C
E_{|C|}$. By \cite[Proposition 1]{StSz1}, a closed
densely defined operator $C$ in $\hh$ is quasinormal
if and only if $U |C| \subseteq |C|U$, where $C=U|C|$
is the polar decomposition of $C$. It is well-known
that quasinormal operators are always subnormal and
that the reverse implication does not hold in general.
For more information on quasinormal operators we refer
the reader to \cite{bro,con}, the bounded case, and to
\cite{StSz1,maj}, the unbounded one. Our aim in this
section is to show that the above definition of
quasinormality coincide with the one given by Kaufman
in \cite{kau} (see condition (ii) below). In fact, we
prove that the equality in Kaufman's definition can be
replaced by inclusion (see condition (i) below).
Recall that if $C$ is a closed densely defined
operator in $\hh$, then $C^*C$ is a positive
selfadjoint operator (cf.\ \cite[Section 4.5]{b-s}).
   \begin{thm} \label{thS1}
Let $C$ be a closed densely defined operator in $\hh$.
Then the following conditions are equivalent{\em :}
   \begin{enumerate}
   \item[(i)] $CC^*C \subseteq C^*CC$,
   \item[(ii)] $CC^*C = C^*CC$,
   \item[(iii)] $(I+C^*C)^{-1} C \subseteq C
(I+C^*C)^{-1}$,
   \item[(iv)] $E_{|C|}C \subseteq CE_{|C|}$.
   \end{enumerate}
   \end{thm}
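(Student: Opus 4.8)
The plan is to prove the cycle (ii)$\Rightarrow$(i)$\Rightarrow$(iii)$\Leftrightarrow$(iv)$\Rightarrow$(ii). The implication (ii)$\Rightarrow$(i) is trivial. For (iii)$\Leftrightarrow$(iv) I would invoke Lemma~\ref{lemS3} with $R=C^*C$: it yields $(I+C^*C)^{-1}C\subseteq C(I+C^*C)^{-1}$ if and only if $E_{C^*C}C\subseteq CE_{C^*C}$, and since $|C|=(C^*C)^{1/2}$ while $t\mapsto t^{2}$ is a Borel automorphism of $\rbb_+$, the measures $E_{|C|}$ and $E_{C^*C}$ have exactly the same spectral projections; hence $E_{C^*C}C\subseteq CE_{C^*C}$ is equivalent to $E_{|C|}C\subseteq CE_{|C|}$.

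For (i)$\Rightarrow$(iii), put $R=C^*C$ and recall that $I+R$ is boundedly invertible with range $\dz R\subseteq\dz C$. Fix $f\in\dz C$ and set $g=(I+R)^{-1}f$. Then $g\in\dz R\subseteq\dz C$, and $Rg=f-g\in\dz C$; hence $g\in\dz{CC^*C}$, so by (i) one gets $g\in\dz{C^*CC}$, that is $Cg\in\dz R$ and $C(Rg)=R(Cg)$. Since $C(Rg)=Cf-Cg$, this reads $Cf=(I+R)(Cg)$, i.e.\ $C(I+R)^{-1}f=(I+R)^{-1}Cf$; as $f\in\dz C$ was arbitrary and $(I+R)^{-1}f\in\dz C$, this is exactly (iii). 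I expect this step to be short once one notices the identity $Rg=f-g$.

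The bulk of the argument is (iv)$\Rightarrow$(ii). Assume (iv) and, for $n\in\nbb$, set $P_n=E_{|C|}([0,n])$ and $\hh_n=P_n\hh$. Then $P_n\to I$ strongly; $P_n$ commutes with $C$ by (iv) and with $|C|$ and $C^*C$ by the spectral theorem; and $|C|\Le n$ on $\hh_n$. Consequently $\hh_n$ reduces $C$ and $C^*C$, one has $\hh_n\subseteq\dz{CC^*C}\cap\dz{C^*CC}$, the restriction $C_n:=C|_{\hh_n}$ is a bounded operator on $\hh_n$ with $C_n^{*}C_n=(C^*C)|_{\hh_n}$, and $|C_n|=|C||_{\hh_n}$. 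Since $C_n$ commutes with $E_{|C_n|}$, $C_n$ is bounded quasinormal, hence commutes with $C_n^{*}C_n$ (apply Lemma~\ref{lemS1} to the bounded selfadjoint $|C_n|$ with $\phi(x)=x^{2}$, or cite the bounded theory); in other words $CC^*Ch=C^*CCh$ for every $h\in\hh_n$.

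It remains to pass to the limit. If $f\in\dz{C^*CC}$, then $Cf\in\dz{C^*C}\subseteq\dz{C^*}$ together with $f\in\dz C$ forces $f\in\dz{C^*C}$; now $P_nf\to f$, $\,C^*C(P_nf)=P_nC^*Cf\to C^*Cf$, and $C\bigl(C^*C(P_nf)\bigr)=CC^*C(P_nf)=C^*CC(P_nf)=P_nC^*CCf\to C^*CCf$, so closedness of $C$ gives $C^*Cf\in\dz C$ and $CC^*Cf=C^*CCf$; thus $C^*CC\subseteq CC^*C$. Symmetrically, if $f\in\dz{CC^*C}$ then $f\in\dz{C^*C}$ and $C^*Cf\in\dz C$, and $C(P_nf)=P_nCf\to Cf$, $\,C^*C\bigl(C(P_nf)\bigr)=C^*CC(P_nf)=CC^*C(P_nf)=P_nCC^*Cf\to CC^*Cf$, so closedness of $C^*C$ (it is selfadjoint) gives $Cf\in\dz{C^*C}$ and $C^*CCf=CC^*Cf$; thus $CC^*C\subseteq C^*CC$, and (ii) follows. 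The main obstacle is precisely this implication: one must verify carefully that $\hh_n$ genuinely reduces the unbounded operators $C$ and $C^*C$, that the restricted adjoint is $C_n^{*}=P_nC^{*}|_{\hh_n}$ so that $C_n^{*}C_n=(C^*C)|_{\hh_n}$, and the closing limit argument relies essentially on the closedness of both $C$ and $C^*C$.
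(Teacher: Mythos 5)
Your proposal is correct, and for three of the four implications it matches the paper: the paper also derives (iii) from the algebraic inclusion $C(I+C^*C)\subseteq (I+C^*C)C$ (your computation with $g=(I+R)^{-1}f$ and $Rg=f-g$ is just the spelled-out version of that step), and it also obtains (iii)$\Rightarrow$(iv) from Lemma \ref{lemS3} combined with the substitution $x\mapsto\sqrt{x}$ relating $E_{C^*C}$ to $E_{|C|}$. Where you genuinely diverge is (iv)$\Rightarrow$(ii). The paper settles this in two lines via the polar decomposition: citing \cite{StSz1} and \cite{ota2} it gets $U|C|=|C|U$, whence $CC^*C=U|C|^3=|C|^2U|C|=C^*CC$ with the domain equalities coming for free. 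You instead truncate by the spectral projections $P_n=E_{|C|}([0,n])$, reduce to the bounded operators $C_n=C|_{\hh_n}$, which are quasinormal on $\hh_n$ and hence satisfy $C_nC_n^*C_n=C_n^*C_nC_n$ by Lemma \ref{lemS1}, and then recover the two inclusions $C^*CC\subseteq CC^*C$ and $CC^*C\subseteq C^*CC$ by letting $n\to\infty$ and invoking the closedness of $C$ and of the selfadjoint $C^*C$. This is longer but entirely self-contained, avoiding the external facts about polar decompositions of quasinormal operators; it is essentially the bounded-vector technique the paper itself deploys later, in the proof of (v)$\Rightarrow$(i) of Theorem \ref{main}, via $\hinf=\bigcup_{n}\ob{E_{|C|}([0,n])}$. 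The delicate points you flag all check out: $\hh_n$ reduces $C$ precisely because of (iv); $C_n^*C_n=(C^*C)|_{\hh_n}$ follows from $\is{C_n^*C_ng}{h}=\is{Cg}{Ch}=\is{|C|^2g}{h}$ for $g,h\in\hh_n$ (so you do not even need the identification $C_n^*=P_nC^*|_{\hh_n}$, though it too is true); and the limit passages use only $P_nC\subseteq CP_n$ and $P_nC^*C\subseteq C^*CP_n$, both available.
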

   \begin{proof}
(i)$\Rightarrow$(iii) It follows from (i) that $C(I +
C^*C) \subseteq (I + C^*C) C$. This implies that $(I +
C^*C)^{-1} C (I + C^*C) \subseteq C$, which yields
(iii).

(iii)$\Rightarrow$(iv) Apply \cite[Theorem
5.4.10]{b-s} and Lemma \ref{lemS3} with $\phi(x) =
\chi_{\varDelta}(\sqrt{x})$, where $\chi_{\varDelta}$
is the characteristic function of a set $\varDelta \in
\borel{\rbb_+}$.

(iv)$\Rightarrow$(ii) Let $C=U|C|$ be the polar
decomposition of $C$. By \cite[Proposition 1]{StSz1}
and \cite[Lemma 2.2]{ota2}, we have $U|C| = |C|U$,
which implies that
   \begin{align*}
CC^*C = U|C|^3 = |C|^2U|C| = C^*C C.
   \end{align*}

(ii)$\Rightarrow$(i) Evident.
   \end{proof}
   \begin{rem}  \label{sq0}
It is worth mentioning that the inclusion $C^*CC
\subseteq CC^*C$, which is opposite to the one in
Theorem \ref{thS1}(i), do not imply quasinormality. To
see this, take a nonzero closed densely defined
operator $C$ such that $\dz{C^2} = \{0\}$ (see
\cite{Nai,Cher} for the case of symmetric operators,
or \cite{Bu,j-j-s3} for the case of hyponormal
composition operators). Then $\dz{C^*CC} = \{0\}$ and
thus $C^*CC \subseteq CC^*C$. However, $C$ is not
quasinormal. Indeed, otherwise, by Theorem \ref{thS1},
$C^*CC = CC^*C = U|C|^3$, which implies that $C^*CC$
is densely defined, a contradiction.
   \end{rem}
To prove Theorem \ref{main}, which is one of the main
results of this paper, we need three preparatory
lemmas.
   \begin{lem} \label{lemS4}
If $C$ is a closed densely defined operator in $\hh$,
then $\hinf \subseteq \dzn{C^*C}$ and $(I+C^*C) \hinf
= \hinf$, where $\hinf = \bigcup_{n=1}^\infty
\ob{E_{|C|}([0,n])}$.
   \end{lem}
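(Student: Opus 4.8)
The plan is to reduce everything to the bounded functional calculus of the positive selfadjoint operator $|C| = (C^*C)^{1/2}$, whose spectral measure $E_{|C|}$ I regard as a spectral Borel measure on $\rbb_+$ (as in the proof of Lemma \ref{lemS3}). For $n \Ge 1$ I would write $P_n = E_{|C|}([0,n])$, so that the closed subspaces $\ob{P_n}$ increase with $n$ and $\hinf = \bigcup_{n\Ge 1}\ob{P_n}$. Since $|C| = \int_{\rbb_+} x\, E_{|C|}(\D x)$, the projection $P_n$ reduces $|C|$, one has $\ob{P_n}\subseteq\dz{|C|}$, and $|C|P_n = \int_{[0,n]} x\, E_{|C|}(\D x) \in \ogr{\hh}$ with norm at most $n$. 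Because $C^*C = |C|^2$, the same conclusions hold for $C^*C$: it reduces $P_n$, one has $\ob{P_n}\subseteq\dz{C^*C}$, $C^*C f = P_n C^*C f$ for every $f\in\ob{P_n}$, and $C^*C P_n \in \ogr{\hh}$.

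For the first assertion I would iterate the last observation: since $C^*C$ maps $\ob{P_n}$ boundedly into itself, a straightforward induction on $k$ gives $\ob{P_n}\subseteq\dz{(C^*C)^k}$ and $(C^*C)^k\ob{P_n}\subseteq\ob{P_n}$ for every $k\in\zbb_+$, whence $\ob{P_n}\subseteq\dzn{C^*C}$; taking the union over $n\Ge 1$ yields $\hinf\subseteq\dzn{C^*C}$.

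For the second assertion, the inclusion $(I+C^*C)\hinf\subseteq\hinf$ is immediate, since both $I$ and $C^*C$ carry each $\ob{P_n}$ into itself. For the reverse inclusion I would use that, by the spectral theorem, $(I+C^*C)^{-1} = \int_{\rbb_+}\frac{1}{1+x^2}\,E_{|C|}(\D x)$ is a bounded Borel function of $|C|$, hence commutes with $P_n$ and maps $\ob{P_n}$ into itself. Since $I+C^*C$ and $(I+C^*C)^{-1}$ are mutually inverse (the first on $\dz{C^*C}\supseteq\ob{P_n}$, the second on $\hh$) and both leave $\ob{P_n}$ invariant, their restrictions to $\ob{P_n}$ are mutually inverse bounded operators on $\ob{P_n}$; in particular $(I+C^*C)\ob{P_n} = \ob{P_n}$. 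Taking the union over $n\Ge 1$ gives $\hinf\subseteq(I+C^*C)\hinf$, which together with the previous inclusion proves the claim.

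The only step requiring real care is the passage from the bounded spectral calculus of $|C|$ to statements about the unbounded operators $C^*C$, $(C^*C)^k$ and $(I+C^*C)^{-1}$ on the invariant subspaces $\ob{P_n}$ --- essentially the standard theory of bounded vectors for a positive selfadjoint operator (cf.\ \cite{b-s}). Once that is in place, the rest is bookkeeping; no estimate beyond $\||C|P_n\|\Le n$ is actually needed.
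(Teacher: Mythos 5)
Your argument is correct and follows essentially the same route as the paper: both proofs exploit that the spectral projections $E_{|C|}([0,n])$ reduce $C^*C$ to a bounded positive operator on $\ob{E_{|C|}([0,n])}$, which gives the inclusion into $\dzn{C^*C}$ and the surjectivity of $I+C^*C$ on each of these subspaces. The only (immaterial) difference is in the last step, where the paper invokes invertibility of $I+R_n$ in $\ogr{\hh_n}$ for the bounded positive restriction $R_n$, while you use the global resolvent $(I+C^*C)^{-1}$ and its commutation with the spectral projections.
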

   \begin{proof}
Set $R=C^*C$ and $\hh_n=\ob{E_{|C|}([0,n])}$ for $n
\in \nbb$. Fix $n \in \nbb$. It is clear that $\hh_n
=\overline{\hh}_{n}$ and $\hh_n \subseteq \dz{R}$. By
\cite[Theorem 6.3.2]{b-s}, the space $\hh_n$ reduces
$R$, and thus $\hh_{n} \subseteq \dzn{R}$. By the
closed graph theorem (or via \cite[Chapter 5]{b-s}),
the operator $R_{n}:=R|_{\hh_n}$ is bounded. Since
$R_{n}$ is positive, we deduce that $(I + R_{n})^{-1}
\in \ogr{\hh_{n}}$ and consequently $(I+R)\hh_{n} =
\ob{I + R_{n}}=\hh_{n}$. This completes the proof.
   \end{proof}
   \begin{lem} \label{lemS4.5}
If $C$ is a closed densely defined operator in $\hh$
and $n\in \zbb_+$, then
   \begin{enumerate}
   \item[(i)] $(C^*C)^n \subseteq C^{*n}C^n$ if and only if
$(C^*C)^n = C^{*n}C^n$,
   \item[(ii)] $(C^*C)^n \subseteq (C^{n})^*C^n$ if and only if
$(C^*C)^n = (C^{n})^*C^n$ provided $C^n$ is densely
defined,
   \item[(iii)] if $(C^*C)^n = C^{*n}C^{n}$, then $(C^*C)^n =
(C^n)^* C^{n}$.
   \end{enumerate}
   \end{lem}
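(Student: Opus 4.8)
The three parts all revolve around the elementary fact that if $S$ and $T$ are operators with $S \subseteq T$ and, in addition, $T \subseteq S$ in the sense of domains (i.e.\ $\dz{T} \subseteq \dz{S}$), then $S = T$; equivalently, a symmetric-type inclusion upgrades to equality as soon as one controls the domains. For part (i), the plan is first to observe that the reverse inclusion $C^{*n}C^n \subseteq (C^*C)^n$ always holds on the relevant domain, so that from $(C^*C)^n \subseteq C^{*n}C^n$ one gets equality. Concretely, I would argue by induction on $n$: the operator $(C^*C)^n$ has domain $\dz{(C^*C)^n}$, and a vector $f$ lies in $\dz{C^{*n}C^n}$ precisely when $C^n f$ is defined and $C^n f \in \dz{C^{*n}}$. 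Using $C^*C \subseteq C^*C$ together with the standard identity $C^*(C^*C) \subseteq (C^*C)C^*$-type manipulations is delicate, so instead I would lean on the observation that $C^{*n}C^n$ is a restriction of $(C^*C)^n$ in a natural way: for $f \in \dz{C^n}$ with $C^n f \in \dz{C^{*n}}$, one shows inductively $f \in \dz{(C^*C)^n}$ and the values agree. The base case $n = 0, 1$ is trivial, and the inductive step writes $C^{*n}C^n = C^{*(n-1)}(C^*C)C^{n-1}$ and uses the inductive hypothesis applied to the factor $C^{*(n-1)} C^{n-1}$. Given this one-sided inclusion, the stated equivalence is immediate.

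For part (ii) the strategy is identical, but with $C^{*n}C^n$ replaced by $(C^n)^*C^n$; here the densely-definedness of $C^n$ is exactly what is needed for $(C^n)^*$ to make sense. The key point is again that $(C^n)^* C^n \subseteq (C^*C)^n$ always holds when $C^n$ is densely defined: indeed $C^{*n} \subseteq (C^n)^*$ need not hold, but the opposite containment structure on the \emph{product} $(C^n)^*C^n$ versus $(C^*C)^n$ does, because $\dz{(C^n)^*C^n} = \{f \in \dz{C^n} : C^n f \in \dz{(C^n)^*}\}$ and on this set one can still run the same inductive factorisation, using that $(C^n)^*$ agrees with $C^{*n}$ on $\ob{C^n} \cap \dz{C^{*n}}$. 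So once more the nontrivial inclusion is automatic and the equivalence follows.

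For part (iii), assume $(C^*C)^n = C^{*n}C^n$. Since $C^n$ is then densely defined (its domain contains $\dz{(C^*C)^n}$, which is dense as $(C^*C)^n$ is selfadjoint, being a power of the positive selfadjoint operator $C^*C$), the operator $(C^n)^*$ exists. The inclusion $C^{*n} \subseteq (C^n)^*$ gives $C^{*n}C^n \subseteq (C^n)^*C^n$, hence $(C^*C)^n \subseteq (C^n)^*C^n$; combined with part (ii), which applies precisely because $C^n$ is densely defined, this yields $(C^*C)^n = (C^n)^*C^n$.

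The main obstacle will be establishing cleanly the one-sided inclusions $C^{*n}C^n \subseteq (C^*C)^n$ and $(C^n)^*C^n \subseteq (C^*C)^n$, i.e.\ the inductive domain bookkeeping: one must check that membership in $\dz{C^n}$ together with $C^nf \in \dz{C^{*n}}$ (resp.\ $\dz{(C^n)^*}$) really does force $f \in \dz{(C^*C)^n}$, and that the computed values coincide. The factorisation $C^{*n}C^n = C^{*(n-1)}(C^*C)C^{n-1}$ is only an inclusion in general, so the induction has to be phrased carefully so that at each stage one is enlarging, not shrinking, to $(C^*C)^n$; I expect this to be the place where the proof in the paper spends most of its effort.
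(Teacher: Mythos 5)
Your plan reduces all three parts to the claim that the one‑sided inclusions $C^{*n}C^n \subseteq (C^*C)^n$ and $(C^n)^*C^n \subseteq (C^*C)^n$ hold automatically; this claim is false, and that is exactly why the lemma is not a triviality. Concretely, let $B$ be an unbounded positive selfadjoint operator in a Hilbert space $\kk$ and let $C$ be the closed densely defined operator in $\kk\oplus\kk$ given by $C(f_1,f_2)=(0,Bf_1)$ for $f_1\in\dz{B}$, $f_2\in\kk$. Then $\dz{C^2}=\dz{B}\oplus\kk$ and $C^2=0$ there, so $C^{*2}C^2$ and $(C^2)^*C^2$ are both the zero operator with domain $\dz{B}\oplus\kk$, while $C^*C=B^2\oplus 0$ and $(C^*C)^2=B^4\oplus 0$ has domain $\dz{B^4}\oplus\kk\subsetneq\dz{B}\oplus\kk$. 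Hence neither $C^{*2}C^2$ nor $(C^2)^*C^2$ is a restriction of $(C^*C)^2$: the domains of the two sides in (i) and (ii) are in general incomparable, and the inductive factorisation $C^{*n}C^n=C^{*(n-1)}(C^*C)C^{n-1}$ cannot produce the containment you want. Your part (iii) is sound as far as it goes, but it invokes part (ii), so the whole argument collapses with it.

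The mechanism that actually proves the lemma is different and much softer. The operator $(C^*C)^n$ is selfadjoint, while $C^{*n}C^n$ and $(C^n)^*C^n$ are \emph{symmetric} whenever they are densely defined: for $f,g$ in the domain of $C^{*n}C^n$ one has $\is{C^{*n}C^nf}{g}=\is{C^nf}{C^ng}$, using $C^{*n}\subseteq (C^n)^*$, and this form is Hermitian. Now if $(C^*C)^n\subseteq C^{*n}C^n$, then $C^{*n}C^n$ is densely defined (its domain contains that of the selfadjoint operator $(C^*C)^n$), hence it is a symmetric extension of a selfadjoint operator, hence equal to it, since selfadjoint operators are maximal symmetric; the same argument gives (ii), and (iii) follows by combining the hypothesis with $C^{*n}C^n\subseteq (C^n)^*C^n$ and applying the maximality argument once more. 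You should replace the attempted domain bookkeeping by this maximality argument.
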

   \begin{proof}
(iii) Observe that the operator $(C^*C)^n$ is
selfadjoint (see e.g., \cite[Theorems 7.14 and
7.19]{weid}). Since $(C^*C)^n = C^{*n}C^{n}$, we see
that $C^n$ is densely defined and consequently
$(C^*C)^n \subseteq (C^n)^* C^{n}$. As selfadjoint
operators are maximal symmetric and $(C^n)^* C^{n}$ is
symmetric, we get $(C^*C)^n = (C^n)^* C^{n}$.

Similar argument can be used to prove (i) and (ii).
   \end{proof}
   \begin{lem} \label{lemS5}
If $C$ is a quasinormal operator in $\hh$, then
   \begin{align} \label{ZenEq}
(C^*C)^n = C^{*n}C^{n} = (C^n)^* C^{n}, \quad n\in
\zbb_+.
   \end{align}
   \end{lem}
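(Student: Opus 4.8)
The plan is to push the standard bounded-operator computation through the polar decomposition $C = U|C|$, keeping careful track of domains. Since $C^*C = |C|^2$, we have $(C^*C)^n = |C|^{2n}$, and by Lemma \ref{lemS4.5}(i),(iii) it is enough to establish the single inclusion $(C^*C)^n \subseteq C^{*n}C^n$ (the cases $n\in\{0,1\}$ are trivial, so fix $n\Ge 2$). The starting observation, already extracted in the proof of Theorem \ref{thS1}, is that quasinormality of $C$ gives the operator equality $U|C| = |C|U$; equivalently, $h\in\dz{|C|}\Leftrightarrow Uh\in\dz{|C|}$, with $U|C|h = |C|Uh$ on $\dz{|C|}$.

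First I would assemble three ingredients. (a) A straightforward induction on $k$ based on $U|C|=|C|U$ shows $\dz{C^k}=\dz{|C|^k}$ and $C^k = U^k|C|^k$; the only thing to check is that $U$ preserves each $\dz{|C|^k}$, which is contained in the domain part of $U|C|=|C|U$. (b) Since $U\in\ogr{\hh}$, the adjoint formula for products gives $C^* = |C|U^*$. (c) The operator $P:=U^*U$ is the orthogonal projection of $\hh$ onto $(\jd{C})^\perp = (\jd{|C|})^\perp = \ob{I - E_{|C|}(\{0\})}$; hence $P$ commutes with $|C|$ and $|C|Pf = |C|f$ for all $f\in\dz{|C|}$.

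Next I would prove by induction on $m\Ge 1$ the auxiliary claim: for every $g\in\dz{|C|^m}$ one has $U^m g\in\dz{C^{*m}}$ and $C^{*m}U^m g = |C|^m g$. For $m=1$: $Ug\in\dz{C^*}$ because $U^*Ug = Pg\in\dz{|C|}$, and $C^*Ug = |C|U^*Ug = |C|Pg = |C|g$. For the step, given $g\in\dz{|C|^m}$ one computes $C^*U^m g = |C|U^*U^m g = |C|PU^{m-1}g = |C|U^{m-1}g = U^{m-1}|C|g$, using (c) and then $U|C|=|C|U$; since $|C|g\in\dz{|C|^{m-1}}$, the inductive hypothesis applied to $|C|g$ gives $U^{m-1}|C|g\in\dz{C^{*(m-1)}}$ and $C^{*(m-1)}U^{m-1}|C|g = |C|^{m}g$, whence $U^m g\in\dz{C^{*m}}$ and $C^{*m}U^m g = |C|^m g$. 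Finally, for $f\in\dz{|C|^{2n}} = \dz{(C^*C)^n}$ we have $f\in\dz{C^n}$, $C^n f = U^n|C|^n f$ by (a), and $g:=|C|^n f\in\dz{|C|^n}$; so the claim gives $C^n f = U^n g\in\dz{C^{*n}}$ and $C^{*n}C^n f = C^{*n}U^n g = |C|^n g = |C|^{2n}f = (C^*C)^n f$. Hence $(C^*C)^n\subseteq C^{*n}C^n$, and Lemma \ref{lemS4.5}(i),(iii) upgrades this to the asserted triple equality.

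The hard part will be the domain bookkeeping inside the two inductions — ensuring at each stage that $U^k g$ lands in the domain of the operator about to be applied to it. This is exactly where the genuine equality $U|C| = |C|U$ (not merely $U|C|\subseteq|C|U$) and the spectral description $P = I - E_{|C|}(\{0\})$ are indispensable; with those in hand the identities are as formal as in the classical bounded case.
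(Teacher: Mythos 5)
Your argument is correct and follows essentially the same route as the paper's proof: both rest on the polar decomposition together with the operator equality $U|C|=|C|U$ supplied by quasinormality, the identification of $U^*U$ as the orthogonal projection onto $\overline{\ob{|C|}}$ (your $P=I-E_{|C|}(\{0\})$), an induction that pushes powers of $U$ through powers of $|C|$, and Lemma \ref{lemS4.5} to obtain the final equality $(C^*C)^n=(C^n)^*C^n$. The only difference is organizational: the paper runs the induction directly on the operator identity $C^{*(n+1)}C^{n+1}=C^*(C^*C)^nC$ using $|C|^k=U^*|C|^kU$, whereas you reduce to the inclusion $(C^*C)^n\subseteq C^{*n}C^n$ via Lemma \ref{lemS4.5}(i) and track domains element by element.
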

   \begin{proof}
Let $C=U|C|$ be the polar decomposition of $C$. By
\cite[Lemma 2.2]{ota2}, we have $U|C| = |C|U$. First
we show that
   \begin{align} \label{Zen2}
|C|^k = U^* |C|^k U, \quad k\in \nbb.
   \end{align}
Indeed, it follows from $U|C| = |C|U$ that $U|C|^k =
|C|^kU$ for all $k\in \nbb$. Since $U^*U$ is the
orthogonal projection of $\hh$ onto
$\overline{\ob{|C|}}$, we get $|C|^k = U^*U |C|^k =
U^*|C|^kU$ for all $k\in \nbb$, which yields
\eqref{Zen2}.

Using induction on $n\Ge 1$, we will show that the
left-hand equality in \eqref{ZenEq} holds. Clearly it
is valid for $n=1$. Assuming it holds for a fixed
$n\Ge 1$ and noting that $(U|C|)^*=|C|U^*$, we get
   \allowdisplaybreaks
   \begin{align*}
C^{*(n+1)}C^{n+1} & = C^*(C^{*n}C^n)C = C^*(C^*C)^nC =
(U|C|)^* |C|^{2n} U|C|
   \\
& = |C| (U^* |C|^{2n} U) |C| \overset{\eqref{Zen2}}=
|C| |C|^{2n} |C| = (C^*C)^{n+1},
   \end{align*}
which proves our claim. Applying Lemma
\ref{lemS4.5}(iii) completes the proof.
   \end{proof}
Now we are ready to prove the afore-mentioned
characterization of quasinormal operators. In the case
of bounded operators the equivalences
(i)$\Leftrightarrow$(ii)$\Leftrightarrow$(iii) have
been proved by Embry in \cite[page 63]{Em}, and the
implication (v)$\Rightarrow$(i) can be found in
\cite{Jib}.
   \begin{thm} \label{main}
Let $C$ be a closed densely defined operator in $\hh$.
Then the following conditions are equivalent{\em :}
   \begin{enumerate}
   \item[(i)] $C$ is quasinormal,
   \item[(ii)] $C^{*n}C^{n} = (C^*C)^n$ for every $n \in
\zbb_+$,
   \item[(iii)] there exists a $($unique$)$ spectral
Borel measure $E$ on $\rbb_+$ such that
   \begin{align*}
C^{*n}C^{n} = \int_{\rbb_+} x^n E(\D x), \quad n\in
\zbb_+,
   \end{align*}
   \item[(iv)] there exists a $($unique$)$ spectral
Borel measure $E$ on $\rbb_+$ such that
   \begin{align*}
C^{*n}C^{n} = \int_{\rbb_+} x^n E(\D x), \quad n\in
\{1,2,3\},
   \end{align*}
   \item[(v)] $C^{*n}C^{n} = (C^*C)^n$ for  $n \in
\{2,3\}$.
   \end{enumerate}
   \end{thm}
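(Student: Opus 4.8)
The implications (i)$\Rightarrow$(ii)$\Rightarrow$(iii)$\Rightarrow$(iv) and (ii)$\Rightarrow$(v) are the easy half. Indeed (i)$\Rightarrow$(ii) is precisely the left-hand equality in \eqref{ZenEq} of Lemma \ref{lemS5}; for (ii)$\Rightarrow$(iii) one takes $E=E_{|C|}\circ(x\mapsto x^2)^{-1}$, i.e. the spectral measure of $C^*C=|C|^2$ pushed forward so that $\int_{\rbb_+}x^n\,E(\D x)=(C^*C)^n=C^{*n}C^n$, with uniqueness coming from the uniqueness part of the spectral theorem (the measure $E$ is forced to be $E_{C^*C}$ by the $n=1$ case); (iii)$\Rightarrow$(iv) and (ii)$\Rightarrow$(v) are trivial restrictions. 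So the real content is to close the cycle, and I would aim for the two implications (v)$\Rightarrow$(ii) and (iv)$\Rightarrow$(i), or alternatively (v)$\Rightarrow$(iii) and (iii)$\Rightarrow$(i); the cleanest route is probably (iv)$\Rightarrow$(i) together with (v)$\Rightarrow$(iv).

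**Proving (v)$\Rightarrow$(iv) (or directly (v)$\Rightarrow$(i)).** Set $R=C^*C$, a positive selfadjoint operator, and let $\hinf=\bigcup_{n\Ge1}\ob{E_{|C|}([0,n])}$ as in Lemma \ref{lemS4}. The point of $\hinf$ is that on each reducing subspace $\hh_n=\ob{E_{|C|}([0,n])}$ the operator $R_n=R|_{\hh_n}$ is bounded, so $\hinf$ is a dense set of bounded vectors for $R$ on which all the algebra of $|C|$, $R$, and their resolvents acts boundedly and everything is genuinely commutative. The equalities $C^{*2}C^2=R^2$ and $C^{*3}C^3=R^3$ should, when restricted to $\hinf$, force the ``moment'' operators $C^{*n}C^n$ to agree with $R^n$ for all $n$ by an Embry-type argument: from $C^{*n}C^n=R^n$ for $n=2,3$ one extracts, on $\hinf$, a positivity/Hankel condition on the operator-valued moment sequence $T_n:=C^{*n}C^n$ that pins down $T_n=R^n$ inductively (this is where the truncated operator Stieltjes moment problem enters — the hypothesis gives the moments $T_0,T_1,T_2,T_3$, and the three-term structure forces a unique representing measure, namely $E_{|C|}$ pushed forward). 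Concretely, I expect to show $\|C^nf\|^2=\is{R^nf}{f}$ first for $f\in\hinf$ and then, using closedness of $C^n$ and the core property of $\hinf$, for all $f\in\dz{C^n}$, and simultaneously that $\ob{C^n}$ lands in the right place so that $C^{*n}C^n=R^n$.

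**Proving (iv)$\Rightarrow$(i).** Given the spectral measure $E$ with $C^{*n}C^n=\int x^n\,E(\D x)$ for $n=1,2,3$: the $n=1$ case gives $E=E_{C^*C}=E_{|C|}\circ(\cdot)^{2}$, so $R^k=\int x^k\,E(\D x)$ for all $k$, and in particular the hypotheses for $n=2,3$ say $C^{*2}C^2=R^2$ and $C^{*3}C^3=R^3$ — so (iv) and (v) really carry the same information, and it suffices to do (v)$\Rightarrow$(i). Working on $\hinf$: for $f\in\hinf$ we have $f\in\dzn{C}$ (since $\hinf\subseteq\dzn{R}$ and $\hh_n$ reduces everything), and the identities give $\|C^2f\|^2=\is{R^2f}{f}=\||C|^2f\|^2$ and $\|C^3f\|^2=\is{R^3f}{f}$. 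I would combine these with the polarization/Cauchy–Schwarz-type inequality $\is{R^2f}{f}\is{Rf}{f}\Le\cdots$ is not quite it; rather the key algebraic identity is that quasinormality on $\hinf$ is equivalent to $\||C|^kf\|^2=\is{R^kf}{f}$ holding in a compatible way for $k=1,2,3$, which then propagates. Having established $C$ is quasinormal on the core $\hinf$ — i.e. $E_{|C|}(\varDelta)C f=CE_{|C|}(\varDelta)f$ for $f\in\hinf$ — one upgrades to all of $\dz{C}$ using that $\hinf$ is a core for $C$ (which follows since $\hh_n\uparrow$ and $E_{|C|}([0,n])\to I$ strongly together with closedness of $C$) and that $E_{|C|}(\varDelta)$ is bounded, yielding $E_{|C|}C\subseteq CE_{|C|}$, which is (i).

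**Main obstacle.** The hard part is the moment-problem step inside (v)$\Rightarrow$(ii): showing that the single extra equality at $n=3$ (beyond $n=2$) is enough to force $C^{*n}C^n=R^n$ for all $n$, i.e. that the operator Stieltjes moment sequence is determined by its first four terms under the structural constraint coming from $C$. This requires exploiting the specific algebraic form $T_n=C^{*n}C^n$ — plain moment sequences are certainly not determined by four moments — via the technique of bounded vectors on $\hinf$, reducing to the bounded case where Embry's argument (using $(C^*C)^2=C^{*2}C^2$, $(C^*C)^3=C^{*3}C^3$ to deduce $U|C|=|C|U$ on the relevant subspace) applies. Handling the unbounded closure carefully — ensuring $\hinf$ is a core for each $C^n$ and that no domain is lost when passing from $\hinf$ to $\dz{C}$ — is the technical heart, and is exactly what Lemma \ref{lemS4} and Lemma \ref{lemS4.5} are set up to support.
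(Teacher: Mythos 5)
Your overall architecture --- the easy chain (i)$\Rightarrow$(ii)$\Rightarrow$(iii)$\Rightarrow$(iv) via $E=E_{C^*C}$, the observation that (iv) and (v) carry the same information, the use of the bounded vectors $\hinf$ from Lemma \ref{lemS4}, and the final upgrade from $\hinf$ to $\dz{C}$ via the core property --- is exactly the paper's. But the one step that carries the theorem, namely getting from (v) back to (i), is not actually proved: you write that the hypotheses ``should force'' $C^{*n}C^n=(C^*C)^n$ for all $n$ by ``an Embry-type argument,'' you begin an inequality and then abandon it (``is not quite it''), and your closing paragraph concedes that this moment-problem step is the main obstacle. That obstacle is the content of the theorem, so this is a genuine gap. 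Moreover, aiming at (v)$\Rightarrow$(ii) (recovering \emph{all} moments from four of them) is harder than what is needed; the paper never does this.

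The missing idea is a short algebraic identity, not a moment-problem analysis. From (v) one gets the operator identities $C^{*2}CC^*C^2=C^*(C^*C)^2C=C^{*3}C^3=(C^*C)^3$, $C^*CC^{*2}C^2=(C^*C)^3$ and $C^{*2}C^2C^*C=(C^*C)^3$; in particular every $f\in\dz{(C^*C)^3}$ lies in $\dz{C^*C^2-CC^*C}$ and
   \begin{align*}
\|(C^*C^2-CC^*C)f\|^2 &= \is{C^{*2}CC^*C^2f}{f}-\is{C^*CC^{*2}C^2f}{f}\\
&\quad -\is{C^{*2}C^2C^*Cf}{f}+\is{(C^*C)^3f}{f}=0.
   \end{align*}
Hence $C(I+C^*C)f=(I+C^*C)Cf$ for $f\in\dz{(C^*C)^3}$. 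Since Lemma \ref{lemS4} gives $(I+C^*C)\hinf=\hinf\subseteq\dz{(C^*C)^3}$, substituting $g=(I+C^*C)f$ yields $(I+C^*C)^{-1}Cg=C(I+C^*C)^{-1}g$ for all $g\in\hinf$; because $\hinf$ is a core for $|C|$ and hence for $C$ (equal graph norms), this extends to $(I+C^*C)^{-1}C\subseteq C(I+C^*C)^{-1}$, which is quasinormality by Theorem \ref{thS1}(iii). Note that this route goes directly to the commutation relation $CC^*C\subseteq C^*CC$ rather than through the representing measure you were trying to pin down; without the displayed identity (or an equivalent substitute) your proposal does not close the cycle.
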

Before proving Theorem \ref{main} we make a remark.
   \begin{rem}
By Lemma \ref{lemS4.5}(i), Theorem \ref{main} remains
valid if the equality in (ii) and (v) is replaced by
the inclusion ``$\supseteq$''. This is not the case
for ``$\subseteq$''. To see this, it is enough to
consider a nonzero closed densely defined operator $C$
such that $\dz{C^2} = \{0\}$ (consult Remark
\ref{sq0}).
   \end{rem}
   \begin{proof}[Proof of Theorem  \ref{main}]
Note that the uniqueness of $E$ in (iii) and (iv)
follows from the equality $C^*C = \int_{\rbb_+} x E(\D
x)$ and the spectral theorem. In both cases
$E=E_{C^*C}$.

(i)$\Rightarrow$(ii) Apply Lemma \ref{lemS5}.

(ii)$\Rightarrow$(iii) By the multiplicative property
of spectral integral (cf.\ \cite[Theorem 7.19]{weid}),
$E=E_{C^*C}$ meets our requirements.

(iii)$\Rightarrow$(iv) Evident.

(iv)$\Rightarrow$(v) This follows from the
multiplicative property of spectral integral and the
equality $E=E_{C^*C}$.

(v)$\Rightarrow$(i) By (v), we have
   \begin{align}   \label{IBJ2}
   \begin{aligned}
C^{*2}CC^*C^2 = C^*(C^*C)^2C = C^{*3}C^3 = (C^*C)^3,
   \\
C^*CC^{*2}C^2 = (C^*C)^3, \quad C^{*2}C^2 C^*C =
(C^*C)^3.
   \end{aligned}
   \end{align}
The penultimate equality yields $\dz{(C^*C)^3}
\subseteq \dz{C^*C^2 - CC^*C}$. Thus, we have
   \allowdisplaybreaks
   \begin{align*}
\|(C^*C^2 - CC^*C) f\|^2 = \is{C^{*2}CC^*C^2f}{f} & -
\is{C^*CC^{*2}C^2f}{f} - \is{C^{*2}C^2 C^*Cf}{f}
   \\
& + \is{(C^*C)^3f}{f} \overset{\eqref{IBJ2}} = 0
   \end{align*}
for every $f \in \dz{(C^*C)^3}$, which implies that
   \begin{align*}
C^*C^2 f = CC^*Cf, \quad f \in \dz{(C^*C)^3}.
   \end{align*}
Therefore this leads to
   \begin{align*}
C(I+C^*C)f = (I + C^*C)Cf, \quad f \in \dz{(C^*C)^3}.
   \end{align*}
Since $I+C^*C$ is injective and $(I+C^*C)^{-1} \in
\ogr{\hh}$, we deduce that
   \allowdisplaybreaks
   \begin{align*}
(I+C^*C)^{-1} C (I+C^*C)f &= Cf
   \\
&= C (I+C^*C)^{-1} (I + C^*C)f, \quad f \in
\dz{(C^*C)^3}.
   \end{align*}
Applying Lemma \ref{lemS4}, we obtain
   \begin{align} \label{Zen5}
(I+C^*C)^{-1} (C|_{\hinf}) = C (I+C^*C)^{-1}|_{\hinf}
\subseteq C (I+C^*C)^{-1}.
   \end{align}
It is well-known that $\hinf$ is a core for $|C|$
(see e.g., \cite[Proposition 4]{StSz3}). Since
the domains of $C$ and $|C|$ are equal and the
graph norms of $C$ and $|C|$ coincide, we deduce
that $\hinf$ is a core for $C$. This combined
with \eqref{Zen5} leads to
   \begin{align*}
(I+C^*C)^{-1} C = (I+C^*C)^{-1} \,
\overline{C|_{\hinf}} \subseteq
 C (I+C^*C)^{-1}.
   \end{align*}
By Theorem \ref{thS1} this completes the proof.
   \end{proof}
The following corollary can be viewed as a special
case of \cite[Proposition 9.1]{ota2}. Our proof is
different, less involved and much shorter.
   \begin{cor}
If $C$ is a quasinormal operator, then for every $n
\in \zbb_+$, $C^n$ is a quasinormal operator.
   \end{cor}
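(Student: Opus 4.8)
The plan is to lift the polar-decomposition characterisation of quasinormality from $C$ to $C^{n}$. After discarding the trivial cases $n\in\{0,1\}$ (with $C^{0}=I$ normal, hence quasinormal), I would fix $n\Ge 2$ and write $C=U|C|$ for the polar decomposition. By \cite[Proposition 1]{StSz1} and \cite[Lemma 2.2]{ota2}, quasinormality of $C$ gives the \emph{equality} $U|C|=|C|U$. The first thing to extract from this is that $U^{*}$ commutes with $|C|$ as well: since $U$ is bounded, $|C|U^{*}=(U|C|)^{*}=(|C|U)^{*}\supseteq U^{*}|C|$, so $U^{*}|C|\subseteq|C|U^{*}$. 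Hence $U$ and $U^{*}$ both leave $\dz{|C|}$, and therefore each $\dz{|C|^{k}}$, invariant, and commute with the corresponding $|C|^{k}$; moreover $U^{*}U=E_{|C|}(\rbb_{+}\setminus\{0\})=:P$, and $PU=U$ because $\ob{C}=\ob{U|C|}=\ob{|C|U}\subseteq\ob{|C|}$ forces $\overline{\ob{U}}\subseteq\overline{\ob{|C|}}=\ob{P}$, while $\jd{|C|}=\ob{I-P}\subseteq\dz{|C|^{k}}$ for every $k$. In particular $U$ maps its initial space $\ob{P}$ isometrically into itself, so each $U^{n}$ is a partial isometry with initial space $\ob{P}$.

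The core of the argument is the identity $C^{n}=U^{n}|C|^{n}$, that is, $\dz{C^{n}}=\dz{|C|^{n}}$ with $C^{n}f=U^{n}|C|^{n}f$ on this domain. The inclusion $\dz{|C|^{n}}\subseteq\dz{C^{n}}$ I would prove by induction on $k\Le n$: if $f\in\dz{|C|^{n}}$ and $C^{k}f=U^{k}|C|^{k}f$, then $C^{k}f\in\dz{|C|}$ (because $U^{k}$ preserves $\dz{|C|}$ and $|C|^{k}f\in\dz{|C|}$), and applying $C=U|C|$ and sliding $|C|$ past $U^{k}$ gives $f\in\dz{C^{k+1}}$ with $C^{k+1}f=U^{k+1}|C|^{k+1}f$; this uses only $U|C|\subseteq|C|U$. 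The opposite inclusion $\dz{C^{n}}\subseteq\dz{|C|^{n}}$ is the delicate one. For $f\in\dz{C^{n}}$ one strips off the factors of $C$ one at a time: once $f\in\dz{|C|^{j}}$ is known (with $j<n$), one has $C^{j}f=U^{j}|C|^{j}f=|C|^{j}U^{j}f\in\dz{|C|}$, hence $U^{j}f\in\dz{|C|^{j+1}}$; applying $U^{*j}$, which commutes with $|C|$, and using $U^{*j}U^{j}=P$ together with $\jd{|C|}\subseteq\dz{|C|^{j+1}}$, one gets $f=Pf+(I-P)f\in\dz{|C|^{j+1}}$. Iterating from $j=1$ to $j=n-1$ yields $f\in\dz{|C|^{n}}$. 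It is precisely here that the full equality $U|C|=|C|U$ is needed, through the resulting commutation of $U^{*}$ with $|C|$, and not merely the inclusion; I expect this inclusion to be the main technical obstacle.

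Finally I would read off the conclusion. By the first paragraph, $U^{n}$ is a partial isometry with $\jd{U^{n}}=\jd{U}=\jd{|C|}=\jd{|C|^{n}}$, and one checks directly that $\jd{C^{n}}=\jd{|C|^{n}}$ as well (if $U^{n}|C|^{n}f=0$ then $|C|^{n}f\in\ob{|C|^{n}}\cap\jd{U^{n}}\subseteq\ob{P}\cap\ob{I-P}=\{0\}$). Hence, by the uniqueness of the polar decomposition, $C^{n}=U^{n}|C|^{n}$ \emph{is} the polar decomposition of $C^{n}$. In particular $C^{n}$ is densely defined ($\dz{C^{n}}=\dz{|C|^{n}}$ is dense) and closed (if $f_{k}\to f$ and $C^{n}f_{k}=U^{n}|C|^{n}f_{k}\to g$, then $U^{n}$ being isometric on $\overline{\ob{|C|^{n}}}$ forces $\{|C|^{n}f_{k}\}$ to be Cauchy, and closedness of the selfadjoint operator $|C|^{n}$ gives $f\in\dz{|C|^{n}}$ and $C^{n}f=g$), and $|C^{n}|=|C|^{n}$. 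Then $U^{n}|C^{n}|=U^{n}|C|^{n}=|C|^{n}U^{n}=|C^{n}|U^{n}$, so the partial isometry in the polar decomposition of $C^{n}$ commutes with $|C^{n}|$; by \cite[Proposition 1]{StSz1} (equivalently by Theorem \ref{thS1}), $C^{n}$ is quasinormal. All the steps apart from the delicate inclusion above are routine bookkeeping with commuting bounded and selfadjoint operators.
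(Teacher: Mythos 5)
Your argument is correct, but it takes a genuinely different route from the paper. The paper's proof is short because it leans on machinery already in place: quasinormal operators are subnormal, so $C^n$ is closed by a cited result on powers of subnormal operators and densely defined by \cite[Proposition 5]{StSz1}; then Theorem \ref{main}(ii) gives $(C^n)^{*k}(C^n)^k \supseteq C^{*nk}C^{nk}=(C^*C)^{nk}$, and the maximality of selfadjoint operators upgrades this to equality, whence $C^n$ is quasinormal again by Theorem \ref{main}. You instead work directly with the polar decomposition: from $U|C|=|C|U$ you derive $U^*|C|\subseteq |C|U^*$, show that $C^n=U^n|C|^n$ with $\dz{C^n}=\dz{|C|^n}$, identify this as \emph{the} polar decomposition of $C^n$ (so $|C^n|=|C|^n$), and conclude via Theorem \ref{thS1}(iv), i.e.\ \cite[Proposition 1]{StSz1}. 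The two-sided domain identification is indeed the delicate point, and your handling of the inclusion $\dz{C^n}\subseteq\dz{|C|^n}$ --- peeling off one factor at a time, using that $U^{*j}U^j=P$ and that $\jd{|C|}$ sits inside every $\dz{|C|^k}$ --- is sound; it genuinely requires the commutation of $U^*$ with $|C|$, as you note. What your approach buys is self-containedness (no appeal to subnormality or to the closedness of powers of subnormal operators from \cite{sto1}) plus the explicit extra conclusions $|C^n|=|C|^n$ and $\dz{C^n}=\dz{|C|^n}$; what the paper's approach buys is brevity, since Theorem \ref{main} has already absorbed all the hard work.
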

   \begin{proof}
Since quasinormal operators are subnormal, we infer
from \cite[Proposition 5.3.]{sto1} that $C^n$ is
closed. By \cite[Proposition 5]{StSz1}, $C^n$ is
densely defined. Fix $k\in \zbb_+$. It follows from
Theorem \ref{main} that $(C^n)^{*k} (C^n)^k \supseteq
C^{*nk} C^{nk} = (C^*C)^{nk}$. Since $(C^n)^{*k}
(C^n)^k$ is symmetric and $(C^*C)^{nk}$ is
selfadjoint, we get
   \begin{align} \label{no1}
(C^n)^{*k} (C^n)^k = (C^*C)^{nk}.
   \end{align}
Similarly, we see that $((C^n)^*C^n)^k \supseteq
(C^{*n}C^n)^k = (C^{*}C)^{nk}$. Since $((C^n)^*C^n)^k$
is symmetric (in fact selfadjoint) and $(C^*C)^{nk}$
is selfadjoint, we get $((C^n)^*C^n)^k =
(C^{*}C)^{nk}$. This, \eqref{no1} and Theorem
\ref{main} imply that $C^n$ is quasinormal.
   \end{proof}
In view of Lemma \ref{lemS5} and Theorem \ref{main},
the following problem arises.
   \begin{opq} \label{opq1}
Is it true that, if $C$ is a closed operator in $\hh$
such that $C^3$ is densely defined and\footnote{\;See
also Lemma \ref{lemS4.5}(ii).} $(C^*C)^n = (C^n)^*
C^{n}$ for $n\in \{2,3\}$, then $C$ is quasinormal?
   \end{opq}
   \section{\label{sect}More on $\boldsymbol{(C^*C)^2 = C^{*2}C^2}$}
   Regarding Theorem \ref{main}, it is tempting to ask
the question whether the condition (v) can be replaced
by the single equality $(C^*C)^2 = C^{*2}C^2$ without
affecting the conclusion. Though, the answer is in the
negative (cf.\ Section \ref{s4}), operators satisfying
this equality are often paranormal (see Theorem
\ref{parapara} and Corollary \ref{paran} below).
Recall that an operator $C$ in $\hh$ is said to be
{\em paranormal} if $\|Cf\|^2 \Le \|C^2 f\| \|f\|$ for
all $f \in \dz{C^2}$. We refer the reader to
\cite{Dan} for examples of unbounded paranormal
operators with pathological properties related to
closability (in particular, paranormal operators may
not be closable).
   \begin{thm}\label{parapara}
Let $C$ be a closed densely defined operator in $\hh$
such that
   \begin{align} \label{zal}
(C^*C)^2 = C^{*2}C^2.
   \end{align}
Then the following assertions hold{\em :}
   \begin{enumerate}
   \item[(i)] $\dz{|C|^2} \subseteq \dz{C^2}$,
$C^2|_{\dz{|C|^2}}$ is closed and $\dz{|C|^4}$ is a
core for $C^2|_{\dz{|C|^2}}$,
   \item[(ii)] $\|Cf\|^2 \Le \|C^2 f\| \|f\|$ for all
$f \in \dz{|C|^2}$,
   \item[(iii)] $C^2$
is closed if and only if $\dz{C^2} \subseteq
\dz{|C|^2}$,
   \item[(iv)] $C$ is
paranormal if and only if $C^2$ is closed.
   \end{enumerate}
   \end{thm}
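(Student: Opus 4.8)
The plan is to work with $R:=C^*C=|C|^2$ and the polar decomposition $C=U|C|$, recalling that $\dz C=\dz{|C|}$ with $\|Cf\|=\||C|f\|$, that $\dz{|C|^2}=\dz R$ and $\dz{|C|^4}=\dz{R^2}$, and that $(C^*C)^2$ is selfadjoint. The backbone of the whole argument is the identity $\|C^2f\|=\||C|^2f\|$, which I would first establish on $\dz{R^2}$: by \eqref{zal} we have $\dz{R^2}=\dz{(C^*C)^2}=\dz{C^{*2}C^2}\subseteq\dz{C^2}$, and for such $f$, routine manipulation of adjoints together with selfadjointness of $R$ gives $\|C^2f\|^2=\is{C^{*2}C^2f}{f}=\is{R^2f}{f}=\|Rf\|^2$.

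For (i) I would extend this from $\dz{R^2}$ to $\dz{|C|^2}$ using closedness of $C$ only (not of $C^2$). Given $f\in\dz R$, set $f_n=E_{|C|}([0,n])f$; by Lemma~\ref{lemS4}, $f_n\in\hinf\subseteq\dz{R^2}$, and $f_n\to f$, $Rf_n\to Rf$. Since $\|C(f_n-f_m)\|^2=\is{R(f_n-f_m)}{f_n-f_m}\to0$ and $\|C^2(f_n-f_m)\|^2=\is{R^2(f_n-f_m)}{f_n-f_m}=\|R(f_n-f_m)\|^2\to0$, applying closedness of $C$ twice gives $f\in\dz{C^2}$, $C^2f=\lim_n C^2f_n$ and $\|C^2f\|=\|Rf\|=\||C|^2f\|$. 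This proves $\dz{|C|^2}\subseteq\dz{C^2}$ and the norm identity on all of $\dz{|C|^2}$. Because the identity makes a sequence in $\dz{|C|^2}$ $C^2$-graph-Cauchy precisely when it is $|C|^2$-graph-Cauchy, and $|C|^2$ is closed, $C^2|_{\dz{|C|^2}}$ is closed; and since the $C^2$- and $|C|^2$-graph norms agree on $\dz{|C|^2}$, a subspace is a core for $C^2|_{\dz{|C|^2}}$ iff it is a core for $|C|^2$, and $\dz{|C|^4}=\dz{R^2}$ is such a core (it contains $\hinf$, which is a core for $R$ by Lemma~\ref{lemS4}). Part (ii) is then immediate from Cauchy--Schwarz: for $f\in\dz{|C|^2}\subseteq\dz{C^2}$, $\|Cf\|^2=\||C|f\|^2=\is{|C|^2f}{f}\Le\||C|^2f\|\,\|f\|=\|C^2f\|\,\|f\|$.

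For (iii), if $\dz{C^2}\subseteq\dz{|C|^2}$ then by (i) the two domains coincide, so $C^2=C^2|_{\dz{|C|^2}}$ is closed. Conversely, if $C^2$ is closed then it is densely defined (since $\dz{C^2}\supseteq\dz{|C|^2}$), so $(C^2)^*\supseteq C^{*2}$ yields $(C^2)^*C^2\supseteq C^{*2}C^2=(C^*C)^2$; as both $(C^2)^*C^2$ and $(C^*C)^2$ are selfadjoint, hence maximal symmetric, they coincide, whence $|C^2|=((C^2)^*C^2)^{1/2}=(|C|^4)^{1/2}=|C|^2$ and therefore $\dz{C^2}=\dz{|C^2|}=\dz{|C|^2}$. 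Finally, for (iv): if $C$ is paranormal and $f_n\in\dz{C^2}$ with $f_n\to f$, $C^2f_n\to g$, then $\|C(f_n-f_m)\|^2\Le\|C^2(f_n-f_m)\|\,\|f_n-f_m\|\to0$, so closedness of $C$ applied twice gives $f\in\dz{C^2}$, $C^2f=g$, i.e. $C^2$ is closed (this direction uses neither \eqref{zal} nor (i)--(iii)). Conversely, if $C^2$ is closed, then $\dz{C^2}=\dz{|C|^2}$ by (iii), and (ii) is precisely the paranormal inequality on that domain.

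I expect the main obstacle to be the construction in (i): since $C^2$ need not be closed one cannot argue inside $C^2$ directly, and every assertion — the inclusion $\dz{|C|^2}\subseteq\dz{C^2}$, closedness of the restriction, the core statement, and the norm identity feeding (ii) — must be squeezed out of closedness of $C$ together with the single equality $\|C^2f\|=\||C|^2f\|$. The one genuinely slick step, in (iii), is to notice that closedness of $C^2$ forces $(C^2)^*C^2=(C^*C)^2$ by maximality of selfadjoint operators, which collapses $|C^2|$ to $|C|^2$.
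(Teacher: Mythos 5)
Your proof is correct and follows essentially the same route as the paper: the key isometry $\|C^2f\|=\||C|^2f\|$ on $\dz{|C|^4}$, its extension to $\dz{|C|^2}$ by approximation together with a double application of the closedness of $C$, and the maximality of selfadjoint operators to get $(C^2)^*C^2=(C^*C)^2$ in (iii). The only substantive variations are cosmetic — you use the explicit spectral approximants $E_{|C|}([0,n])f$ where the paper takes an abstract core sequence, and you prove directly that a closed paranormal operator has closed square, a fact the paper cites from the literature.
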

   \begin{proof}
It follows from \eqref{zal} that $\dz{|C|^4} \subseteq
\dz{C^2}$. First we show that the operator
$C^2|_{\dz{|C|^4}}$ is closable and
   \begin{align} \label{asa}
\dz{|C|^2} = \dz{\overline{C^2|_{\dz{|C|^4}}}}.
   \end{align}
Indeed, it follows from \eqref{zal} that
   \begin{align} \label{noc1}
\||C|^2|_{\dz{|C|^4}} f\| = \|C^2|_{\dz{|C|^4}} f\|
\text{ for all } f \in \dz{|C|^4}.
   \end{align}
Since $|C|^2$ is closed, the operator
$|C|^2|_{\dz{|C|^4}}$ is closable. This and
\eqref{noc1} imply that $C^2|_{\dz{|C|^4}}$ is
closable. Applying \eqref{noc1} again and the fact
that $\dz{|C|^4}$ is a core for $|C|^2$ (see e.g.,
\cite[Theorem 4.5.1]{b-s}), we get \eqref{asa}.

Continuing the proof, we deduce from the
Cauchy-Schwarz inequality that
   \begin{align}   \label{ha1}
\|Cf\|^2 = \is{|C|^2 f}{f} \Le \||C|^2 f\|
\|f\|\overset{\eqref{noc1}} = \|C^2 f\| \|f\|, \quad f
\in \dz{|C|^4}.
   \end{align}
Take $f\in \dz{|C|^2}$. Since $\dz{|C|^4}$ is a core
for $|C|^2$, there exists a sequence
$\{f_n\}_{n=1}^\infty \subseteq \dz{|C|^4}$ such that
$f_n \to f$ and $|C|^2 f_n \to |C|^2 f$ as $n\to
\infty$. Since, by \eqref{noc1}, $\|C^2 (f_n-f_m)\| =
\||C|^2 (f_n-f_m)\|$ for all $m,n \in \nbb$, we deduce
that there exists $h \in \hh$ such that
   \begin{align} \label{ha2}
\text{$C^2 f_n \to h$ as $n\to \infty$.}
   \end{align}
It follows from \eqref{ha1} that
   \begin{align*}
\|C(f_n-f_m)\|^2 \Le \|C^2 (f_n-f_m)\| \|f_n-f_m\|,
\quad m,n \in \nbb,
   \end{align*}
which implies that there exists $g\in \hh$ such that
$Cf_n \to g$ as $n\to \infty$. Since $C$ is closed and
$f_n \to f$ as $n\to \infty$, we see that $f \in
\dz{C}$ and $Cf_n \to Cf$ as $n\to \infty$. This
combined with \eqref{ha2} implies that $f\in \dz{C^2}$
(hence $\dz{|C|^2} \subseteq \dz{C^2}$) and $C^2 f_n
\to C^2f$ as $n\to \infty$. Since, by \eqref{ha1},
$\|Cf_n\|^2 \Le \|C^2 f_n\| \|f_n\|$ for all $n\in
\nbb$, we deduce that (ii) holds. Moreover, because
$f_n \to f$ and $C^2|_{\dz{|C|^4}} f_n \to C^2f$ as
$n\to \infty$, and $C^2|_{\dz{|C|^4}}$ is closable, we
conclude that $f \in
\dz{\overline{C^2|_{\dz{|C|^4}}}}$ and $C^2f =
\overline{C^2|_{\dz{|C|^4}}}f$. This means that
$C^2|_{\dz{|C|^2}} \subseteq
\overline{C^2|_{\dz{|C|^4}}}$. Now, by \eqref{asa},
$C^2|_{\dz{|C|^2}} = \overline{C^2|_{\dz{|C|^4}}}$.
This completes the proof of the assertions (i) and
(ii).

(iii) If $C^2$ is closed, then \eqref{zal} and Lemma
 \ref{lemS4.5}(iii) imply that $|C|^4 = |C^2|^2$ and
consequently $|C|^2 = |C^2|$, which yields $\dz{C^2} =
\dz{|C^2|} = \dz{|C|^2}$ (cf.\ \cite[Lemma
8.1.1]{b-s}). The reverse implication follows from
(i).

(iv) If $C$ is paranormal, then, by \cite[Proposition
6(iv)]{StSzC}, $C^2$ is closed. The reverse
implication is a direct consequence of (ii) and (iii).
   \end{proof}
   \begin{cor} \label{paran}
Let $C \in \ogr{\hh}$. If $(C^*C)^2 = C^{*2}C^2$, then
$C$ is paranormal.
   \end{cor}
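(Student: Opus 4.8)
The plan is to observe that Corollary \ref{paran} is an immediate specialization of Theorem \ref{parapara} to the bounded case, so essentially no new work is required. Since $C \in \ogr{\hh}$, the operator $C^2$ is bounded and everywhere defined, hence closed, and moreover $\dz{C^2} = \dz{|C|^2} = \hh$ (indeed $|C|^2 = C^*C \in \ogr{\hh}$). Thus the hypothesis \eqref{zal} of Theorem \ref{parapara} is exactly the assumption $(C^*C)^2 = C^{*2}C^2$ made here, and all the domain conditions appearing in that theorem are automatically satisfied.

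From here there are two equivalent finishing moves, and I would pick whichever reads more cleanly. The first: invoke Theorem \ref{parapara}(iv) — $C$ is paranormal if and only if $C^2$ is closed — and note that $C^2$ is closed because it is bounded. The second, even more direct: apply Theorem \ref{parapara}(ii), which gives $\|Cf\|^2 \Le \|C^2 f\|\,\|f\|$ for all $f \in \dz{|C|^2}$; since $\dz{|C|^2} = \hh = \dz{C^2}$, this is precisely the defining inequality for paranormality of $C$. Either route is a single sentence.

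There is no real obstacle here; the only thing to be mildly careful about is citing the correct item of Theorem \ref{parapara} and making explicit that boundedness of $C$ forces $\dz{|C|^2} = \dz{C^2} = \hh$ so that the conclusion of Theorem \ref{parapara}(ii) is genuinely the full paranormality condition (not merely its restriction to a proper subspace). I would therefore state the proof as: ``Since $C$ is bounded, $C^2$ is bounded, hence closed; by Theorem \ref{parapara}(iv) (applied with \eqref{zal} given by hypothesis) $C$ is paranormal.'' — optionally adding the one-line alternative via Theorem \ref{parapara}(ii) for transparency.
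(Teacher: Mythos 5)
Your proof is correct and matches the paper's intent: the corollary is stated without proof precisely because it follows immediately from Theorem \ref{parapara} once one notes that boundedness of $C$ makes $C^2$ closed and $\dz{|C|^2}=\dz{C^2}=\hh$. Either of your two finishing moves (via item (iv) or item (ii)) is fine.
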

Recall that the spectral radius and the norm of a
bounded paranormal operator coincide (cf.\
\cite[Theorem 1]{istr}). Note also that the converse
implication in Corollary \ref{paran} does not hold in
general. To see this consider a bounded weighted shift
with a nonconstant monotonically increasing sequence
of positive weights.
   \begin{cor}  \label{paran2}
If $C \in \ogr{\hh}$ is either compact or algebraic,
then $C$ is normal if and only if $(C^*C)^2 =
C^{*2}C^2$. In particular, if $\hh$ is finite
dimensional, then this equivalence holds for every
operator $C$ on $\hh$.
   \end{cor}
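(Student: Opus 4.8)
The plan is to reduce both cases — $C$ compact and $C$ algebraic — to the finite-dimensional situation, or more precisely to a situation where a paranormal operator is automatically normal, and then to invoke Corollary \ref{paran}. The ``only if'' direction is trivial in all cases, since every normal operator satisfies $C^*C=CC^*$ and hence $(C^*C)^2=C^{*2}C^2$ by the functional calculus. So the work is entirely in the ``if'' direction: assuming $(C^*C)^2=C^{*2}C^2$, I must produce normality.

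First I would invoke Corollary \ref{paran} to conclude that $C$ is paranormal. Now recall the standard fact that a paranormal operator satisfies $\|C^nf\|^{1/n}$-type power inequalities; in particular, for bounded $C$ one has $\|C\|=r(C)$, the spectral radius (this is \cite[Theorem 1]{istr}, already cited in the excerpt). The key classical input I would then use is: a \emph{compact} paranormal operator is normal, and likewise an \emph{algebraic} paranormal operator is normal. For the compact case, the argument is the usual one — a compact paranormal operator has its eigenspaces mutually orthogonal and reducing (paranormality forces $\ker(C-\lambda)$ to be reducing and the restriction to be normal on each), and by the Riesz–Schauder theory the nonzero part decomposes as an orthogonal sum over eigenvalues; combined with $\|C\|=r(C)$ applied to each restriction, one gets a normal operator. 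For the algebraic case, $C$ satisfies a polynomial $p(C)=0$; then $\sigma(C)$ is finite and $C$ decomposes (via the Riesz idempotents) into a finite orthogonal sum of operators each with one-point spectrum $\{\lambda\}$, and on each piece $C-\lambda$ is paranormal and quasinilpotent, hence (by $\|C-\lambda\|=r(C-\lambda)=0$) zero; so $C$ is diagonalizable with orthogonal spectral projections, i.e. normal. The point common to both reductions is that paranormality upgrades a spectral/algebraic decomposition to an \emph{orthogonal} one and kills quasinilpotent summands.

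For the final ``in particular'' clause, if $\dim\hh<\infty$ then every operator $C$ on $\hh$ is both compact and algebraic (it satisfies its characteristic polynomial), so the equivalence is a special case of what was just proved; alternatively one notes directly that a finite-dimensional paranormal operator is normal.

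The main obstacle I anticipate is not the finite-dimensional clause, which is immediate, but making the compact and algebraic reductions clean: specifically, verifying that for a bounded paranormal $C$ the eigenspace $\ker(C-\lambda)$ reduces $C$ (so that the spectral/Riesz decomposition is an \emph{orthogonal} direct sum, not merely a topological one), and that a quasinilpotent paranormal operator vanishes. Both are standard lemmas about paranormal operators, and the cleanest route is to quote them — the quasinilpotent-implies-zero statement follows immediately from $\|C\|=r(C)$ (\cite[Theorem 1]{istr}), while the reducing-eigenspace statement is elementary (if $Cf=\lambda f$ then for $g\perp f$ the paranormal inequality applied to $f+tg$, $t\in\cbb$, forces $\langle Cg,f\rangle=0$, giving $C^*f=\bar\lambda f$). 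Once these are in hand, the orthogonal decomposition in each case is routine and Corollary \ref{paran} finishes the argument.
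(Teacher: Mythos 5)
Your argument is essentially the paper's: deduce paranormality from Corollary \ref{paran} and then invoke the classical facts that a compact paranormal operator is normal and an algebraic paranormal operator is normal, which the paper simply cites as \cite[Theorem 2]{istr} and \cite[Theorem 6.5]{c-j-s} rather than reproving. The only caution concerns your in-line sketch of the algebraic case: paranormality is not translation-invariant, so you cannot assert that $C-\lambda$ is paranormal on a Riesz subspace; quoting the cited theorems, as you yourself propose, avoids this.
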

   \begin{proof}
Apply \cite[Theorem 6.5]{c-j-s} in the case of
algebraic operators, \cite[Theorem 2]{istr} in the
case of compact operators, and Corollary \ref{paran}
in both cases.
   \end{proof}
   \section{Examples}  \label{s4}
We begin by showing that the equality \eqref{zal} does
not imply the quasinormality of $C$. In view of
Corollary \ref{paran2}, such an operator cannot be
constructed in a finite dimensional space. It is worth
mentioning that a hyponormal operator $C\in \ogr{\hh}$
which satisfies the equality \eqref{zal} must be
quasinormal (cf.\ \cite[page 63]{Em}). This means that
a non-quasinormal operator $C\in \ogr{\hh}$ which
satisfies \eqref{zal} is never hyponormal (though, it
is always paranormal, cf.\ Corollary \ref{paran}).

The first counterexample we present is related to
Toeplitz operators. We refer the reader to \cite{Arv}
and \cite{con} for more information on Toeplitz
operators.
   \begin{exa} \label{prz1}
Let $S$ be the Hardy shift on the Hardy space $H^2$.
Let $T_{\varphi}$ be a Toeplitz operator on $H^2$ with
a symbol $\varphi \in L^\infty$ such that $\varphi >
0$ almost everywhere with respect to the normalized
Lebesgue measure on the unit circle and $\int_0^{2\pi}
\varphi(\E^{\I t}) \E^{\I t} \D t \neq 0$. Then
$T_{\varphi}$ is a bounded injective positive operator
and $S$ is an isometry which does not commute with
$T_{\varphi}$ (because $\is{ST_{\varphi}1}1=0$ and
$\is{T_{\varphi}S1}1\neq 0$). This implies that the
operator $C:= S T_{\varphi}^{1/2}$ is not quasinormal.
Remembering that $S^*T_{\varphi}S = T_{\varphi}$ (cf.\
\cite[Proposition 4.2.3]{Arv}), we get
   \begin{align*}
C^{*2}C^2 = T_{\varphi}^{1/2} S^* T_{\varphi}^{1/2}
S^*S T_{\varphi}^{1/2} S T_{\varphi}^{1/2} =
T_{\varphi}^{1/2} S^* T_{\varphi} S T_{\varphi}^{1/2}
= T_{\varphi}^{2} = (C^*C)^2.
   \end{align*}
   \end{exa}
Before turning to the next example, we note that a
(unilateral or bilateral) weighted shift $W$ which
satisfies the equality $(W^*W)^2 = W^{*2}W^2$ is
quasinormal. Going a step further, we can verify that
there is no bounded non-quasinormal injective weighted
shift $W$ which satisfies the equality $(W^* W)^3 =
W^{*3}W^3$ (the details are left to the reader).
Passing to more general operators, called weighted
shifts on directed trees, we are able to construct
bounded non-quasinormal injective operators which
satisfy \eqref{zal} (or even \eqref{abc}). First, we
recall necessary definitions and state an auxiliary
result which is of some independent interest in itself
(cf.\ Proposition \ref{ws}).

Suppose $\tcal=(V,E)$ is a directed tree ($V$ and $E$
are the sets of vertices and edges of $\tcal$,
respectively). If $\tcal$ has a root, we denote it by
$\koo$. Put $V^{\circ} = V \setminus \{\koo\}$ if
$\tcal$ has a root and $V^{\circ} = V$ otherwise. For
every $u \in V^{\circ}$, there exists a unique $v \in
V$, denoted by $\pa{u}$, such that $(v,u) \in E$. The
Hilbert space of square summable complex functions on
$V$ equipped with the standard inner product is
denoted by $\ell^2(V)$. For $u \in V$, we define $e_u
\in \ell^2(V)$ to be the characteristic function of
the one-point set $\{u\}$. Given a system $\lambdab =
\{\lambda_v\}_{v\in V^{\circ}}$ of complex numbers, we
define the operator $\slam$ in $\ell^2(V)$, which is
called a {\em weighted shift} on $\tcal$ with {\em
weights} $\lambdab$, as follows
   \begin{align*}
\dz{\slam} = \{f \in \ell^2(V)\colon \Lambda_{\tcal} f
\in \ell^2(V)\} \quad \text{and} \quad \slam f =
\Lambda_{\tcal} f \text{ for } f \in \dz{\slam},
   \end{align*} where
   \begin{align*}
(\Lambda_{\tcal} f)(v) =
   \begin{cases}
   \lambda_v \cdot f(\pa{v}) & \text{if } v \in
V^{\circ},
   \\ 0 & \text{otherwise},
   \end{cases}
\quad v \in V, \, f \in \cbb^V.
   \end{align*}
We refer the reader to \cite{j-j-s} for more on
weighted shifts on directed trees.

   Now, we characterize bounded weighted shifts on
directed trees satisfying the equality
$(\slam^*\slam)^n = \slam^{*n}\slam^n$ for a fixed $n
\in \zbb_+$.
   \begin{pro}\label{ws}
Let $n \in \zbb_+$. If $\slam \in \ogr{\ell^2(V)}$ is
a weighted shift on a directed tree $\tcal=(V,E)$ with
weights $\lambdab = \{\lambda_v\}_{v\in V^{\circ}}$,
then the following two conditions are equivalent{\em
:}
   \begin{enumerate}
   \item[(i)] $(\slam^*\slam)^n = \slam^{*n}\slam^n$,
   \item[(ii)] $\|\slam e_u\|^n = \|\slam^n e_u\|$ for
all $u \in V$.
   \end{enumerate}
   \end{pro}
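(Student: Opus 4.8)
The plan is to prove the equivalence by diagonalization: I would show that, with respect to the canonical orthonormal basis $\{e_u\}_{u \in V}$ of $\ell^2(V)$, \emph{both} operators $(\slam^*\slam)^n$ and $\slam^{*n}\slam^n$ are diagonal, and then just compare their diagonal entries. Since $\slam$ is bounded, all the operators involved are bounded and defined on all of $\ell^2(V)$, so there are no domain issues to worry about, and the case $n=0$ is trivial; thus one may assume $n\in\nbb$.

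First I would record the formula $\slam e_u = \sum_{v \in \dzi u} \lambda_v e_v$ for $u \in V$. For distinct $u, w \in V$ this gives $\is{\slam e_u}{\slam e_w} = \sum_{v \in \dzi u \cap \dzi w} |\lambda_v|^2 = 0$, because a vertex has exactly one parent and hence $\dzi u \cap \dzi w = \emptyset$. Consequently $\slam^*\slam e_u = \|\slam e_u\|^2 e_u$, so that by an obvious induction $(\slam^*\slam)^n e_u = \|\slam e_u\|^{2n} e_u$ for all $u\in V$. The slightly less immediate point is that $\slam^{*n}\slam^n$ is diagonal as well. By induction on $n$ (using $\bigcup_{v\in\dzin{n}{u}}\dzi v = \dzin{n+1}{u}$), the vector $\slam^n e_u$ is supported on $\dzin n u$, the set of descendants of $u$ of generation $n$; the key combinatorial fact is that every vertex has at most one ancestor of a given order, so that $\dzin n u \cap \dzin n w = \emptyset$ whenever $u \neq w$. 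Hence $\is{\slam^n e_u}{\slam^n e_w} = 0$ for $u \neq w$, which yields $\slam^{*n}\slam^n e_u = \|\slam^n e_u\|^2 e_u$ for every $u \in V$.

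With both operators diagonalized, the proof concludes at once: two bounded operators that are diagonal with respect to the same orthonormal basis coincide if and only if their diagonal entries coincide, so (i) is equivalent to the assertion that $\|\slam^n e_u\|^2 = \|\slam e_u\|^{2n}$ for all $u \in V$, and taking nonnegative square roots turns this into (ii). I do not expect a genuine obstacle here. The two places calling for a little care are the tree-combinatorial disjointness of the supports $\supp{\slam^n e_u}$, $u \in V$ — which is exactly where the directed-tree hypothesis enters — and, if one wishes to avoid invoking the explicit action of $\slam^*$ on basis vectors, organizing the whole argument purely through the inner products $\is{\slam^n e_u}{\slam^n e_w}$ computed directly from the definition of $\slam$.
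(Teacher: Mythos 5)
Your proof is correct and is in substance the same as the paper's: the paper reduces (i) via the polarization identity to the statement that $\||\slam|^n f\| = \|\slam^n f\|$ for all $f\in\ell^2(V)$ and then invokes two cited formulas from earlier work, and those formulas are exactly the diagonality statements $(\slam^*\slam)^n e_u = \|\slam e_u\|^{2n}e_u$ and $\slam^{*n}\slam^n e_u = \|\slam^n e_u\|^2 e_u$ that you derive directly from the pairwise disjointness of the sets $\dzin{n}{u}$, $u \in V$. The only real difference is that you give a self-contained elementary argument where the paper outsources the tree combinatorics to citations.
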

   \begin{proof}
By the polarization identity, (i) holds if and only if
$\||\slam|^n f\|^2 = \|\slam^n f\|^2$ for all $f \in
\ell^2(V)$. Hence an application of \cite[Proposition
3.4.3(iv)]{j-j-s} and \cite[Theorem 3.2.2(ii)]{j-j-s2}
completes the proof.
   \end{proof}
The example below deals with weighted shifts on
leafless directed trees with one branching vertex of
valency $2$ (cf.\ \mbox{\cite[page 67]{j-j-s}}).
   \begin{exa} \label{prz2}
Fix $\kappa \in \zbb_+ \sqcup \{\infty\}$. Let
$\tcal_{2,\kappa} = (V_{2,\kappa}, E_{2,\kappa})$ be
the directed tree with $V_{2,\kappa}=\{-k \colon k \in
J_{\kappa}\} \sqcup \{0\} \sqcup \{(i,j)\colon i \in
J_2, j \in \nbb\}$ and
   \begin{align*}
E_{2,\kappa} = \big\{(-k,-k+1)\colon k \in
J_\kappa\big\} &\sqcup \big\{\big(0,(i,1)\big)\colon i
\in J_2\big\}
   \\
& \sqcup \big\{\big((i,j),(i,j+1)\big)\colon i\in J_2,
j \in \nbb\big\},
   \end{align*}
where $J_{\iota} = \{k \in \nbb\colon k \Le \iota\}$
for $\iota \in \zbb_+ \sqcup \{\infty\}$ (the symbol
``$\sqcup$'' denotes disjoint union of sets). Take
$\alpha_1, \alpha_2 \in \cbb \setminus \{0\}$ such
that $|\alpha_1|^2 + |\alpha_2|^2 = 1$. Let $\beta_1,
\beta_2 \in \cbb \setminus \{0\}$ be such that
$|\alpha_1\beta_1|^2 + |\alpha_2 \beta_2|^2 = 1$ and
$(1-|\beta_1|)(1-|\beta_2|) \neq 0$ (clearly, such
$\beta_1$ and $\beta_2$ exist). Define the system of
weights $\lambdab=\{\lambda_v\}_{v\in
V_{2,\kappa}^{\circ}}$ by
   \begin{align*}
\lambda_v =
   \begin{cases}
   \alpha_i & \text{if } v = (i,1) \text{ with } i\in
   J_2,
   \\
   \beta_i & \text{if } v = (i,j) \text{ with } i\in
   J_2 \text{ and } j \Ge 2,
   \\
   1 & \text{otherwise.}
   \end{cases}
   \end{align*}
Let $\slam$ be the weighted shift on
$\tcal_{2,\kappa}$ with weights $\lambdab$. By
\cite[Proposition 3.1.8]{j-j-s}, $\slam \in
\ogr{\ell^2(V_{2,\kappa})}$. It follows from
\cite[Proposition 8.1.7(ii)]{j-j-s} (applied to $u=0$)
that $\slam$ is not quasinormal. However, by
Proposition \ref{ws}, $(\slam^*\slam)^2 =
\slam^{*2}\slam^2$.
   \end{exa}
   \begin{rem} \label{construct}
It follows from Example \ref{prz2} and \cite[Lemma
4.3.1]{j-j-s2} that there exists a bounded injective
non-quasinormal composition operator $C$ on an $L^2$
space over a $\sigma$-finite measure space such that
$(C^*C)^2 = C^{*2}C^2$.
   \end{rem}
   Modifying Example \ref{prz2}, we will show that for
every integer $n \Ge 2$, there exists a
non-quasinormal weighted shift $\slam \in
\ogr{\ell^2(V_{2,0})}$ on $\tcal_{2,0}$ such that
   \begin{align} \label{abc}
(\slam^*\slam)^n & = \slam^{*n}\slam^n \text{ and }
(\slam^*\slam)^k \neq \slam^{*k}\slam^k \text{ for all
} k \in \{2,3,\ldots\} \setminus \{n\}.
   \end{align}
   \begin{exa} \label{prz3}
Take $\alpha_1, \alpha_2, \beta_1, \beta_2 \in \cbb
\setminus \{0\}$ such that $|\alpha_1|^2 +
|\alpha_2|^2 = 1$. Define the system of weights
$\lambdab=\{\lambda_v\}_{v\in V_{2,0}^{\circ}}$ by
   \begin{align*}
\lambda_v =
   \begin{cases}
\alpha_i & \text{if } v = (i,1) \text{ with } i\in
J_2,
   \\[.5ex]
\beta_i & \text{if } v = (i,j) \text{ with } i\in J_2
\text{ and } j \Ge 2.
   \end{cases}
   \end{align*}
Let $\slam$ be the weighted shift on $\tcal_{2,0}$
with weights $\lambdab$. By \cite[Proposition
3.1.8]{j-j-s}, $\slam \in \ogr{\ell^2(V_{2,0})}$. In
view of Proposition \ref{ws}, for every integer $l\Ge
2$, $(\slam^*\slam)^l = \slam^{*l}\slam^l$ if and only
if $|\alpha_1\beta_1^{l-1}|^2 + |\alpha_2
\beta_2^{l-1}|^2 = 1$. In turn, by \cite[Proposition
8.1.7(ii)]{j-j-s}, $\slam$ is quasinormal if and only
if $|\beta_1| =|\beta_2|= 1$. Set
   \begin{align*}
f(x) = \frac{\log\big(\frac{\log(2-x)}{-\log
x}\big)}{\log(\frac{x}{2-x})}, \quad x\in (0,1).
   \end{align*}
It is a simple matter to verify that $f(x)
> 0$ for all $x\in (0,1)$, and $\lim_{x\to 0+} f(x) = 0$. Hence, there exists
$\gamma_n \in (0,1)$ such that $0 < (n-1)f(\gamma_n)
\Le 1$. A standard calculation shows that the function
$g(x)=\gamma_n^{\frac{x}{n-1}} +
(2-\gamma_n)^{\frac{x}{n-1}}$ is strictly increasing
on $[(n-1)f(\gamma_n), \infty)$, and consequently on
$[1,\infty)$. Now, taking $\alpha_1 = \alpha_2 =
\big(\frac{1}{2}\big)^{\frac12}$, $\beta_1 =
\gamma_n^{\frac{1}{2(n-1)}}$ and $\beta_2 =
(2-\gamma_n)^{\frac{1}{2(n-1)}}$, we verify that
   \begin{align*}
\alpha_1^2\beta_1^{2(k-1)} + \alpha_2^2
\beta_2^{2(k-1)} = \frac 12 g(k-1) \neq \frac 12
g(n-1) = \alpha_1^2\beta_1^{2(n-1)} + \alpha_2^2
\beta_2^{2(n-1)} = 1
   \end{align*}
whenever $k, n \Ge 2$ and $k\neq n$, which means that
$\slam$ satisfies \eqref{abc} and is not quasinormal
(the latter also follows from Lemma \ref{lemS5}).
   \end{exa}
   Concluding this section, we construct a quasinormal
operator $C$ such that $C^{*n} \subsetneq (C^n)^*$ for
every $n \Ge 2$. Recall that, by Lemma \ref{lemS5},
$C^{*n}C^{n} = (C^n)^* C^{n}$ for every quasinormal
operator $C$ and for all $n\in \zbb_+$.
   \begin{exa}  \label{prz4}
Let $S$ be an isometry of multiplicity $1$ on a
complex Hilbert space $\mm$ with a normalized cyclic
vector $e_0$. Set $e_n = S^n e_0$ for $n \in \nbb$.
Then we have
   \begin{align} \label{adj}
S^{*k} e_n =
   \begin{cases}
e_{n-k} & \text{if } k\Le n,
   \\[1ex]
0 & \text{if } k> n,
   \end{cases}
\quad k,n\in \zbb_+.
   \end{align}
Put $\hh = \bigoplus_{j=0}^\infty \mm_j$ with
$\mm_j=\mm$ for all $j\in \zbb_+$. Let
$\{r_j\}_{j=0}^\infty$ be a sequence in $\rbb_+$. Set
$C=\bigoplus_{j=0}^\infty r_j S$. By Proposition
\ref{achtenZ}(iii), the operator $C$ is quasinormal.
It follows from Proposition \ref{achtenZ}(i) that
$C^n$ is densely defined and $(C^n)^* =
\bigoplus_{j=0}^\infty r_j^n S^{*n}$ for every $n\in
\zbb_+$. This altogether implies that for every
integer $n\Ge 2$,
   \begin{align}   \label{jedenw}
   \begin{aligned}
\dz{(C^n)^*} & = \bigg\{\sumo_{\hspace{-1.2ex}j \in
\zbb_+} f_j \in \hh\colon \sum_{j=0}^\infty
r_j^{2n}\|S^{*n}f_j\|^2 < \infty\bigg\},
   \\
\dz{C^{*n}} & = \bigg\{\sumo_{\hspace{-1.2ex}j \in
\zbb_+} f_j \in \hh\colon \sum_{j=0}^\infty
r_j^{2k}\|S^{*k}f_j\|^2 < \infty \text{ for } k = 1,
2, \ldots, n\bigg\}.
   \end{aligned}
   \end{align}
Assume that $\sup_{j\in \zbb_+} r_j = \infty$. Then
there exists a sequence $\{t_j\}_{j=0}^\infty
\subseteq \rbb_+$ such~ that
   \begin{align*}
\sum_{j=0}^\infty t_j^2 < \infty \quad \text{and}
\quad \sum_{j=0}^\infty t_j^2 r_j^2 = \infty.
   \end{align*}
Hence, by \eqref{adj} and \eqref{jedenw}, we see that
$\{t_j e_{n-1}\}_{j=0}^\infty \in \dz{(C^n)^*}
\setminus \dz{C^{*n}}$ for all integers $n\Ge 2$.
Since $C^{*n} \subseteq (C^n)^*$, this shows that
$C^{*n} \subsetneq (C^n)^*$ for all integers $n \Ge
2$.
   \end{exa}
\appendix
\section{Orthogonal sums}
For sake of completeness we sketch the proof of the
following facts (cf.\ \cite{StSzB}).
   \begin{pro}\label{achtenZ}
Assume that $\hh=\bigoplus_{\omega \in \varOmega}
\hh_{\omega}$ is the orthogonal sum of complex Hilbert
spaces $\hh_{\omega}$ and $C = \bigoplus_{\omega \in
\varOmega} C_{\omega}$ is the orthogonal sum of
operators $C_{\omega}$ in $\hh_{\omega}$. Then
   \begin{enumerate}
   \item[(i)]
$C^n$ is densely defined and $(C^n)^* =
\bigoplus_{\omega \in \varOmega} (C_{\omega}^n)^*$
provided $n\in \nbb$ and $\overline{\dz{C_{\omega}^n}}
= \hh_{\omega}$ for all $\omega \in \varOmega$,
   \item[(ii)] $C^*C = \bigoplus_{\omega \in \varOmega}
C_{\omega}^* C_{\omega}$ and $|C| = \bigoplus_{\omega
\in \varOmega} |C_{\omega}|$ provided $C_{\omega}$,
$\omega \in \varOmega$, are closed and densely
defined,
   \item[(iii)] $C$ is quasinormal if and only if
$C_{\omega}$ is quasinormal for every $\omega \in
\varOmega$.
   \end{enumerate}
   \end{pro}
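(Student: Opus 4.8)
The plan is to unwind everything from the definition of the orthogonal sum: $\dz{C}=\{f=\bigoplus_{\omega}f_\omega\in\hh: f_\omega\in\dz{C_\omega}\ \forall\omega,\ \sum_\omega\|C_\omega f_\omega\|^2<\infty\}$ and $Cf=\bigoplus_\omega C_\omega f_\omega$. The one point to watch throughout is that a composition of orthogonal sums only gives $\big(\bigoplus_\omega A_\omega\big)\big(\bigoplus_\omega B_\omega\big)\subseteq\bigoplus_\omega(A_\omega B_\omega)$, with the two sides acting identically on each component; consequently $C^n\subseteq\bigoplus_\omega C_\omega^n$ (induction on $n$), $C^*C\subseteq\bigoplus_\omega C_\omega^*C_\omega$, $\big(\bigoplus_\omega|C_\omega|\big)^2\subseteq\bigoplus_\omega|C_\omega|^2$, and the substance of (i)--(ii) is to upgrade these inclusions to the claimed equalities. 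The standard tools for that upgrade are the maximality of selfadjoint operators among symmetric ones and the elementary estimate $\|Ag\|^2=\langle A^*Ag,g\rangle\le\|A^*Ag\|\,\|g\|$.

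For (i): the finitely supported vectors with components in $\dz{C_\omega^n}$ belong to $\dz{C^n}$ and, since each $\dz{C_\omega^n}$ is dense, they are dense in $\hh$, so $C^n$ is densely defined. For the adjoint I would prove two inclusions. ``$\supseteq$'': if $g\in\dz{\bigoplus_\omega(C_\omega^n)^*}$, then using $C^n\subseteq\bigoplus_\omega C_\omega^n$ and absolute convergence of the $\omega$-sums, $\langle C^nf,g\rangle=\sum_\omega\langle C_\omega^nf_\omega,g_\omega\rangle=\sum_\omega\langle f_\omega,(C_\omega^n)^*g_\omega\rangle=\langle f,\bigoplus_\omega(C_\omega^n)^*g_\omega\rangle$ for all $f\in\dz{C^n}$. ``$\subseteq$'': if $g\in\dz{(C^n)^*}$, test against the vectors supported at a single index $\omega_0$ with component in $\dz{C_{\omega_0}^n}$; density of $\dz{C_{\omega_0}^n}$ forces $g_{\omega_0}\in\dz{(C_{\omega_0}^n)^*}$ and $(C_{\omega_0}^n)^*g_{\omega_0}=((C^n)^*g)_{\omega_0}$, whence $\sum_\omega\|(C_\omega^n)^*g_\omega\|^2=\|(C^n)^*g\|^2<\infty$ places $g$ in $\dz{\bigoplus_\omega(C_\omega^n)^*}$ with the correct value.

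For (ii): applying (i) with $n=1$ gives $C^*=\bigoplus_\omega C_\omega^*$, hence $C^*C\subseteq\bigoplus_\omega C_\omega^*C_\omega$; but $C^*C$ is selfadjoint (as $C$ is closed and densely defined) and $\bigoplus_\omega C_\omega^*C_\omega$ is selfadjoint --- an orthogonal sum of positive selfadjoint operators is positive selfadjoint, by the adjoint formula just obtained --- so, since selfadjoint operators are maximal symmetric, the inclusion is an equality. (Equivalently, the apparently missing constraint $\sum_\omega\|C_\omega f_\omega\|^2<\infty$ is automatic by the displayed estimate.) Then $\bigoplus_\omega|C_\omega|$ is positive selfadjoint with square $\bigoplus_\omega|C_\omega|^2=\bigoplus_\omega C_\omega^*C_\omega=C^*C$ (the squaring being an equality by the same argument), so uniqueness of the positive square root yields $|C|=\bigoplus_\omega|C_\omega|$.

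For (iii): one first checks that $C$ is closed and densely defined exactly when all $C_\omega$ are, so either side of the asserted equivalence makes part (ii) applicable; then $C^*C=\bigoplus_\omega C_\omega^*C_\omega$ and $(I+C^*C)^{-1}=\bigoplus_\omega(I+C_\omega^*C_\omega)^{-1}$ is a block-diagonal contraction. The key lemma is: for $B=\bigoplus_\omega B_\omega$ with $\sup_\omega\|B_\omega\|<\infty$ and $A=\bigoplus_\omega A_\omega$, one has $BA\subseteq AB$ if and only if $B_\omega A_\omega\subseteq A_\omega B_\omega$ for every $\omega$ --- ``only if'' by testing at single indices, ``if'' because $B$ maps $\dz{A}$ into itself, the square-summability surviving via $\sum_\omega\|A_\omega B_\omega f_\omega\|^2\le(\sup_\omega\|B_\omega\|)^2\sum_\omega\|A_\omega f_\omega\|^2$. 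Applying this with $B=(I+C^*C)^{-1}$, $A=C$ and invoking the equivalence of quasinormality with condition (iii) of Theorem \ref{thS1} (for $C$ and for each $C_\omega$) gives the claim. The step I expect to demand the most care is the one flagged in the first paragraph: verifying that each equality among orthogonal sums of composite operators ($C^*C$, $|C|^2$, $(I+C^*C)^{-1}$) is genuine rather than a mere inclusion --- the maximality of selfadjoint operators and the Cauchy--Schwarz estimate are exactly what close that gap.
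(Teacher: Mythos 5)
Your proposal is correct and follows essentially the same route as the paper: density of $\dz{C^n}$ via finitely supported vectors, the two inclusions for $(C^n)^*$ (your componentwise identification of $((C^n)^*g)_{\omega}$ is a clean equivalent of the paper's finite-subset estimate), and maximality of selfadjoint operators together with uniqueness of positive square roots for (ii). The only cosmetic divergence is in (iii), where you commute $C$ with the uniformly bounded block-diagonal family $(I+C^*C)^{-1}=\bigoplus_\omega(I+C_\omega^*C_\omega)^{-1}$ and invoke Theorem \ref{thS1}(iii), whereas the paper runs the identical uniform-boundedness argument directly on the spectral projections $E_{|C|}(\varDelta)=\bigoplus_\omega E_{|C_\omega|}(\varDelta)$; both rest on the same observation and on part (ii).
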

   \begin{proof}
(i) It is well-known that (i) holds for $n=1$. Assume
that \mbox{$n\Ge 2$}. It is easily seen that
$\dz{\bigoplus_{\omega \in \varDelta} C_{\omega}^n} =
\bigoplus_{\omega \in \varDelta} \dz{C_{\omega}^n}
\subseteq \dz{C^n}$ for every finite nonempty
$\varDelta \subseteq \varOmega$. This implies that
$C^n$ is densely defined. Since $C^n \subseteq
\bigoplus_{\omega \in \varOmega} C_{\omega}^n$, we
deduce that $\bigoplus_{\omega \in \varOmega}
(C_{\omega}^n)^* \subseteq (C^n)^*$. To prove the
reverse inclusion, take $g =
\sumo_{\hspace{-1.2ex}\omega \in \varOmega} g_{\omega}
\in \dz{(C^n)^*}$ with $g_{\omega} \in \hh_{\omega}$.
Then there exists $c \in \rbb_+$ such that
   \begin{align} \label{haha}
|\is{g}{C^n f}|^2 \Le c \|f\|^2, \quad f \in \dz{C^n}.
   \end{align}
Hence, $|\is{g_{\omega}}{C_{\omega}^n f}|^2 \Le c
\|f\|^2$ for all $f \in \dz{C_{\omega}^n}$, which
implies that $g_{\omega} \in \dz{(C_{\omega}^n)^*}$
for every $\omega \in \varOmega$. Applying
\eqref{haha} again, we see that
   \begin{align*}
\Big|\Big\langle\sumo_{\hspace{-1.2ex}\omega \in
\varDelta} (C_{\omega}^n)^*g_{\omega},
f\Big\rangle\Big|^2 = |\is{g}{C^nf}|^2 \Le c \|f\|^2,
\quad f \in {\EuScript
D}\Big(\bigoplus\nolimits_{\omega \in \varDelta}
C_{\omega}^n\Big),
   \end{align*}
which implies that $\sum_{\omega \in \varDelta}
\|(C_{\omega}^n)^*g_{\omega}\|^2 \Le c$ for every
finite nonempty $\varDelta \subseteq \varOmega$. This
yields $\sum_{\omega \in \varOmega}
\|(C_{\omega}^n)^*g_{\omega}\|^2 \Le c$. Hence $g \in
\dz{\bigoplus_{\omega \in \varOmega}
(C_{\omega}^n)^*}$, which proves (i).

(ii) Clearly $C$ is closed and densely defined.
Applying (i) with $n=1$, we get $C^*C \subseteq
\bigoplus_{\omega \in \varOmega} C_{\omega}^*
C_{\omega}$. Hence, by the maximality of selfadjoint
operators, we obtain the first equality in (ii). In
view of (i), it is clear that the operator
$\bigoplus_{\omega \in \varOmega} |C_{\omega}|$ is
positive and selfadjoint, and $(\bigoplus_{\omega \in
\varOmega} |C_{\omega}|)^2 \subseteq \bigoplus_{\omega
\in \varOmega} |C_{\omega}|^2$. Using the maximality
of selfadjoint operators and the first equality in
(ii), we obtain the second one.

(iii) Suppose $C_{\omega}$ is quasinormal for every
$\omega \in \varOmega$. It follows from (ii) that
   \begin{align*}
E_{|C|}(\varDelta) = \bigoplus_{\omega \in \varOmega}
E_{|C_{\omega}|}(\varDelta), \quad \varDelta \in
\borel{\rbb_+}.
   \end{align*}
Hence, by the inequality
$\|E_{|C_{\omega}|}(\varDelta)\| \Le 1$, we have
   \begin{align*}
E_{|C|}(\varDelta) C \subseteq \bigoplus_{\omega \in
\varOmega} E_{|C_{\omega}|}(\varDelta) C_{\omega}
\subseteq \bigoplus_{\omega \in \varOmega} C_{\omega}
E_{|C_{\omega}|}(\varDelta) = C E_{|C|}(\varDelta),
\quad \varDelta \in \borel{\rbb_+},
   \end{align*}
which shows that $C$ is quasinormal. The reverse
implication is obvious because each $\hh_{\omega}$
reduces $C$.
   \end{proof}
   \subsection*{Acknowledgement} A substantial part
of this paper was written while the first and the
third authors visited Kyungpook National University
during the spring and autumn of 2013. They wish to
thank the faculty and the administration of this unit
for their warm hospitality.
   \bibliographystyle{amsalpha}

\begin{thebibliography}{99}
   \bibitem{Arv} W. Arveson, {\em A Short Course
on Spectral Theory}, Springer, New York, 2002.
   \bibitem{b-s} M. Sh. Birman, M. Z. Solomjak,
{\it Spectral theory of selfadjoint operators in
Hilbert space}, D. Reidel Publishing Co., Dordrecht,
1987.
   \bibitem{bro} A. Brown,  On a class of operators, {\em
Proc. Amer. Math. Soc.} {\bf 4} (1953), 723-728.
   \bibitem{Bu} P. Budzy\'{n}ski, A note on unbounded
hyponormal composition operators in $L^2$-spaces, {\em
J. Funct. Sp. Appl.}, Volume 2012, 1-8.
doi:10.1155/2012/902853.
   \bibitem{Cher} P. R. Chernoff, A semibounded closed
symmetric operator whose square has trivial domain,
{\em Proc. Amer. Math. Soc.} {\bf 89} (1983), 289-290.
   \bibitem{c-j-s} D. Cicho\'{n}, Il Bong Jung, Jan Stochel,
Normality via local spectral radii, {\em J. Operator
Theory} {\bf 61} (2009), 253-278.
   \bibitem{con} J. B. Conway, {\em The theory of subnormal operators},
Mathematical Surveys and Monographs, Providence, Rhode
Island, 1991.
   \bibitem{Dan} A. Daniluk,  On the closability
of paranormal operators. {\em J. Math. Anal. Appl.}
{\bf 376} (2011), 342-348.
  \bibitem{Em} M. R. Embry, A generalization of the
Halmos-Bram criterion for subnormality, {\em Acta Sci.
Math. $($Szeged\/$)$} {\bf 35} (1973), 61-64.
   \bibitem{istr} V. Istr\v{a}\c{t}escu, T. Sait\^{o},
T. Yoshino, {\it On a class of operators}, T\^{o}hoku
Math. J. {\bf 18} (1966), 410-413.
   \bibitem{j-j-s}  Z. J. Jab{\l}o\'nski,  I. B. Jung,
J. Stochel, Weighted shifts on directed trees, {\em
Mem. Amer. Math. Soc.} {\bf 216} (2012), no. 1017,
viii+107pp.
   \bibitem{j-j-s2} Z. J. Jab\l{o}\'nski, I. B. Jung,
J. Stochel, A non-hyponormal operator generating
Stieltjes moment sequences, {\em J. Funct. Anal.} {\bf
262} (2012), 3946-3980.
   \bibitem{j-j-s3} Z. J. Jab{\l}o\'nski,  I. B. Jung,
J. Stochel, A hyponormal weighted shift on a directed
tree whose square has trivial domain, {\em Proc. Amer.
Math. Soc.}, in press.
   \bibitem{Jib} A. A. S. Jibril, On operators for which
$T^{*2} T^2 = (T^*T)^2$, {\em International
Mathematical Forum}, {\bf 46} (2010), 2255-2262.
   \bibitem{kau} W. E. Kaufman, Closed operators and
pure contractions in Hilbert space, {\em Proc. Amer.
Math. Soc.} {\bf 87} (1983), 83-87.
   \bibitem{maj} W. Majdak, A lifting theorem for
unbounded quasinormal operators, {\em J. Math. Anal.
Appl.} {\bf 332} (2007), 934-946.
   \bibitem{Nai} M. Naimark, On the square of a closed symmetric
operator, {\em Dokl. Akad. Nauk SSSR} {\bf 26} (1940),
866-870; ibid. {\bf 28} (1940), 207-208.
   \bibitem{ota2} S. \^Ota, Some classes of $q$-deformed
operators, {\em J. Operator Theory} {\bf 48} (2002),
151-186.
   \bibitem{Rud} W. Rudin, Real and Complex Analysis,
McGraw-Hill, New York 1987.
   \bibitem{sto1} J. Stochel, Lifting strong commutants
of unbounded subnormal operators, {\em Integr. Equat.
Oper. Th.} {\bf 43} (2002), 189-214.
    \bibitem{StSz1} J. Stochel,  F. H. Szafraniec, On normal
extensions of unbounded operators. II, {\em Acta Sci.
Math. $($Szeged$)$} {\bf 53} (1989), 153-177.
   \bibitem{StSzC} J. Stochel, F. H. Szafraniec,
$\mathcal C^\infty$-vectors and boundedness, {\em Ann.
Polon. Math.} {\bf 66} (1997), 223-238.
   \bibitem{StSz3} J. Stochel, F. H. Szafraniec,
Domination of unbounded operators and commutativity,
{\em J. Math. Soc. Japan} {\bf 55} (2003), 405-437.
   \bibitem{StSzB} J. Stochel, F. H. Szafraniec,
{\em Unbounded operators and subnormality}, in
preparation.
   \bibitem{weid} J. Weidmann, {\it Linear operators in
Hilbert spaces}, Springer-Verlag, Berlin, Heidelberg,
New York, {\bf 1980}.
   \end{thebibliography}
   
   \end{document}